\documentclass[11pt,a4paper]{article}
\usepackage{amsmath,amsthm,amsfonts,latexsym,amscd,amssymb, makeidx,graphics, caption}
\makeindex
\usepackage{float}
\usepackage{bm}
\usepackage{enumitem}
\usepackage{tabularx}
\usepackage{breqn}
\usepackage{microtype}
\newtheorem{theorem}{Theorem}[section]
\newtheorem{lemma}[theorem]{Lemma}

\theoremstyle{definition}

\theoremstyle{remark}
\newtheorem{remark}{Remark}

\usepackage{mathtools}

\usepackage[table]{xcolor}
\usepackage{mathptmx}
\usepackage{lineno}

\def\addsymbol #1: #2#3{$#1$ \> \parbox{5in}{#2 \dotfill \pageref{#3}}\\}

\begin{document}

	\begin{center}
		\huge
		{On the number of orthomorphisms of the alternating group on four symbols} 
	\end{center}
		\begin{center}
	  Vivek Kumar Jain and	Rohitesh Pradhan\\
	Department of Mathematics, Central University of South Bihar, Gaya, India\\
		Email:~jaijinenedra@gmail.com, rohiteshpradhan@gmail.com
	\end{center}

	\large{\textbf{Abstract:}}
In this study, the set of orthomorphisms of a finite group $G$ of order $3M$ with a normal subgroup of order $M$ has been partitioned into classes. Additionally, a number of relationships between the orders of these classes have been established. Ultimately, this approach has been used to theoretically determine all 3840 orthomorphisms of alternating groups on four symbols by just counting 30 orthomorphisms—a problem that had been unsolved since 1992.

	\vspace{.5cm}
	
	\noindent \textbf{\textit{Keywords:}} Orthomorphism, Complete mapping, Alternating group.
		\smallskip
	
	\noindent \textbf{\textit{2020 Mathematical Subject Classification:}} 05B15
	
	\section{Introduction}\label{sec1}
	Let $G$ be a group. A bijection $\theta$: $G \rightarrow G$, for which the map $\phi_{\theta}$: $x \mapsto x^{-1}\theta(x)$ is also a bijection from $G$ to itself, is called an \emph{orthomorphism} of $G$ and the map $\phi_{\theta}$ is known as \emph{complete mapping}. An orthomorphism of a group which fixes identity element of the group is called \emph{normalized} orthomorphism. Now onwards, by an orthomorphism, we mean a normalized orthomorphism.  Two orthomorphisms $\theta_{1}$ and $\theta_{2}$ of $G$ are called \emph{orthogonal}, written  $\theta_{1} \perp \theta_{2}$, if the map  $x \mapsto \theta_{1}(x)^{-1}\theta_{2}(x)$ is a bijection from $G$ to itself. Equivalently, $\theta_{1} \perp \theta_{2}$ if and only if $\theta_{1}\theta_{2}^{-1}$ is  an orthomorphism of $G$ \cite[Corollary 1.31, p. 22]{evans}. We denote the set of orthomorphisms of a group $G$ by Orth($G$). We know  that the Cayley table of $G$ is  a Latin square, and an orthomorphism of $G$ gives us a Latin square which is orthogonal to the Cayley table of $G$. Each transversal of the Cayley table of G gives an orthomorphism of $G$ \cite{wanless}.
	
	For groups of order less than equal to $23$ the number of orthomorphisms is known mainly due to computer searches: see Table 13.1 in \cite{evans}, Table 1 in \cite{mckay}, or Table 4 in \cite{wanless}.
	The Hall-Paige theorem \cite{hall} states that groups having non-trivial, cyclic Sylow 2-subgroups have no orthomorphisms. Groups of order less than equal to five are easily handled by hand computation. Euler \cite{euler} showed that $\mathbb{Z}_{7}$ has $19$ normalized orthomorphisms by listing the transversals of the cyclic Latin square of order seven. There are $48$  orthomorphisms for each of the noncyclic groups of order eight proven theoretically by Bedford and Whitaker \cite{bedford}. The orthomorphisms of elementary abelian group of order $8$ can be found either by using permutation polynomials as given in Section 9.2 of \cite{evans}  or by solving systems of linear equations, as given in Section 10.3 and 10.4 of \cite{evans}. If $\mathbb{Z}_{n}$ denotes the cyclic group of order $n$, then the number of orthomorphisms of $\mathbb{Z}_{2} \times \mathbb{Z}_{4}$ and its cycle structure is given in \cite{jain}.  	
	 In 1964, Chang and Tai \cite{chang}, through exhaustive hand computations found that alternating group on four symbols$(A_{4})$ has $3776$ normalized orthomorphisms. In 1965, Hall and Knuth \cite{knuth} using computer found that $A_{4}$ admits $3840$ normalized complete mappings. Later, in $2000$ and $2011$, via computer searches, Shieh et al. \cite{shieh} and Wanless \cite{wanless} confirmed that $A_{4}$ has $3840$ normalized complete mapping respectively.

	  In 1992, Evans  in \cite[Problem 22, p. 98]{evans3}  asked the following problem.
	 
	 \noindent \textbf{Problem:} Give a theoretical proof that $|$Orth($A_{4}$)$| = 3840$.\\
	 Again in 2018 \cite[Problem 16.76, p. 514]{evans}, Evans asked the same problem. We have solved this long standing problem in this paper.\\
 For groups of order $12$ there are three groups namely $A_{4},$ the dihedral group of order $12$ $(D_{12})$ and $\mathbb{Z}_{2} \times \mathbb{Z}_{6}$, which has orthomorphisms. The group $D_{12}$ and $\mathbb{Z}_{2} \times \mathbb{Z}_{6}$ have $6336$ and $16512$ normalized orthomorphisms respectively see Table 13.1 in \cite{evans}. In 2023, Evans \cite{evans2}  gives a method to determine the number of orthomorphisms for an abelian group $G$ of order $9$ with a index $3$ normal subgroup and also indicated how his method can be applied to other groups.
 
  In this paper, we generalized this method to non-abelian groups with a normal subgroup of index 3.
 Establishment of new relations (Lemma 2.3, Lemma 2.4 and 2.5) which significantly reduces the computational effort required to determine the number of orthomorphisms in finite groups possessing a normal subgroup of index $3$. While earlier studies relied on direct enumeration, our approach provides a general framework that reduces the computation.  Alternating group ($A_4$) is the smallest non abelian group with a index $3$ normal subgroup. Orth($A_4$) has been divided into $10$ disjoint classes. These classes are same as introduced in [6]. The new relations of lemmas mentioned above reduces $10$ classes into $4$ classes. These lemmas are also applicable to any finite groups having a normal subgroup of index $3$.  Each reduced class has been further divided into sub-classes. Then, by applying automorphism of Orth($A_4$) on sub-classes, we showed that the number of elements in some sub-classes are same. Ultimately, by determining only $30$ orthomorphisms through hand computation, we counted all 3840 orthomorphisms of $A_4$. 
   \\ \\
	\textbf{Notations} \\
	Let $G$ be a group. We fix following notations for the sake of convenience:
	\begin{enumerate}
		\item [(i)] $S_{n} = Sym\{1,2,\ldots,n\}$,
		\item [(ii)] $A_{n} = Alt\{1,2,\ldots,n\}$,
		\item [(iii)] $\mathbb{Z}_{n}$ represents cyclic group of order $n$,
		\item [(iv)]  $D_{n}$ represents dihedral group of order $2n$,
		\item [(v)] $Aut(G)$ denotes the group of automorphism of $G$, 
		\item [(vi)] $i_{g}(x) = gxg^{-1}$, where $i_{g} \in Aut(G)$ and $x \in G$.
	\end{enumerate}
	 We take alternating group on four symbols as $A_{4}$ and symmetric group on four symbols as $S_{4}$.

	\section{Basic Results}
	In this section, we recall some known definitions for the sake of completeness.

		An \emph{automorphism} $A$ of Orth($G$) is a bijection from Orth($G$) to itself such that $A(\theta_{1}) \perp A(\theta_{2})$ if and only if $\theta_{1} \perp \theta_{2}$, where $\theta_{1}, \theta_{2} \in$ Orth($G$). Automorphisms of Orth($G$) are explained in detail in Section 8.1.3 in \cite{evans}. By \cite[Theorem 8.6, p. 206]{evans} following are examples of automorphisms of Orth($G$).
	\begin{enumerate}
		\item [(i)] For $\psi \in$ Aut($G$), the \textit{homology} $H_{\psi}$ is defined by $H_{\psi}(\theta) = \psi\theta \psi^{-1}$ .
		\item[(ii)] For $g \in G$, the \textit{translation} $T_{g}$ is defined by $T_{g}[\theta](x) = \theta(xg)\theta(g)^{-1}$.
		\item[(iii)] The \textit{reflection} $R$ is defined by $R[\theta](x) = x \theta(x^{-1})$.
	\end{enumerate}
	In this section, we will generalize the concept of collapsed permutation matrix given by Evans \cite{evans2}, from abelian group to any group.
	Let $G=\{g_1,g_2,\ldots,g_n\}$ be a group. Any bijection $\theta$ of $G$ can be represented by a permutation matrix $M_{\theta} = (m_{ij})_{|G| \times |G|}$, where 
	$$ m_{ij} =
	\begin{dcases}
		1 & if~\theta(g_j) = g_i,\\
		0 & otherwise.
	\end{dcases}$$
	The \emph{$g_k$th diagonal} of $M_{\theta}$ is the set of cells $\{(i,j) : g_{j}^{-1}g_{i} = g_k\}$.
	
	\begin{lemma}\label{le1}
		Let $\theta$ be a permutation of a group $G=\{g_1,g_2,\ldots,g_n\}$. Then $\theta$ is an orthomorphism of $G$ if and only if each row, column, and diagonal of $M_{\theta}$ contains exactly one 1. 
	\end{lemma}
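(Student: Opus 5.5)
The plan is to translate each of the three conditions on $M_{\theta}$ into a statement about $\theta$ or $\phi_{\theta}$, using only the definition of $M_{\theta}$. By construction, $m_{ij}=1$ exactly when $\theta(g_j)=g_i$. The ones of $M_{\theta}$ therefore occupy precisely the cells $\{(i,j): g_i=\theta(g_j)\}$, that is, the graph of $\theta$ read with rows indexed by images and columns by arguments.

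\textbf{Rows and columns.} Column $j$ contains exactly one 1 for every $j$, simply because $\theta$ is a function. Row $i$ contains exactly one 1 for every $i$ if and only if each $g_i$ has exactly one preimage, i.e. $\theta$ is a bijection. Since $\theta$ is assumed to be a permutation, the row and column conditions hold automatically, and the lemma reduces to showing that the diagonal condition is equivalent to $\phi_{\theta}$ being a bijection.

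\textbf{Diagonals.} A one in cell $(i,j)$ lies on the $g_k$th diagonal exactly when $g_j^{-1}g_i=g_k$. Substituting $g_i=\theta(g_j)$, this reads $g_j^{-1}\theta(g_j)=g_k$, i.e. $\phi_{\theta}(g_j)=g_k$. Because each column holds exactly one 1, the map $j\mapsto(\theta^{-1}$-row of column $j)$ is a bijection from columns to ones. Hence the number of ones on the $g_k$th diagonal equals $|\phi_{\theta}^{-1}(g_k)|$. Consequently every diagonal contains exactly one 1 if and only if every $g_k$ has exactly one preimage under $\phi_{\theta}$, i.e. $\phi_{\theta}$ is a bijection of $G$.

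\textbf{Conclusion.} Combining the two observations gives the equivalence with the definition of an orthomorphism. Conversely, the row condition alone already forces bijectivity of $\theta$, so the statement holds even without presupposing that $\theta$ is a permutation.

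There is no real obstacle here. The only point needing care is the bookkeeping of the convention: rows correspond to images and columns to arguments, so the diagonal index is $g_j^{-1}g_i$ rather than $g_ig_j^{-1}$. This is exactly what makes the diagonals match $\phi_{\theta}(x)=x^{-1}\theta(x)$ in a possibly non-abelian group.
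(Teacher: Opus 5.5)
Your argument is correct and follows the same route as the paper: the row and column conditions encode bijectivity of $\theta$, and since the ones of $M_\theta$ sit at the cells $(i,j)$ with $g_i=\theta(g_j)$, the ones on the $g_k$th diagonal correspond exactly to the $g_j$ with $\phi_\theta(g_j)=g_k$, so the diagonal condition is bijectivity of $\phi_\theta$. You make explicit the bookkeeping and the converse that the paper calls straightforward. The only blemish is the phrase ``$\theta^{-1}$-row of column $j$'', which should simply refer to the unique cell $(i,j)$ in column $j$ with $g_i=\theta(g_j)$.
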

	\begin{proof}
		Suppose $\theta$ is an orthomorphism of $G$. As $\theta$ is bijection, so each row and column of $M_{\theta}$ contains exactly one $1$. Also, $\phi_{\theta}$ is a bijection, which gives us that every $g_{k}{th}$ diagonal of $M_{\theta}$ contains exactly one $1$. Converse is straight forward.
	\end{proof}
	\subsection{Collapsed permutation matrix}
	Let $G$ be a group with identity element $e$, $|G| = ms$, $H=\{h_{1} = e,h_{2},\ldots,h_{m}\}$ be its normal subgroup of index $s$, and let $D = \{d_{1}=e,d_{2}, \ldots, d_{s}\}$ be an ordered set of distinct coset representatives for $H$ in $G$. Then  $G = d_{1}H \sqcup \ldots \sqcup d_{s}H$ can be arranged as an ordered set $$G=\{d_{1}h_{1},\ldots,d_{1}h_{m};d_{2}h_{1},\ldots,d_{2}h_{m};\ldots;d_{s}h_{1},\ldots,d_{s}h_{m}\}.$$ Let $\theta$ be a permutation of $G$. Consider a submatrix $A_{ij}$ of $M_{\theta}$ formed  by taking the rows $d_{i}h_{1},\ldots, d_{i}h_{m}$ and columns $d_{j}h_{1},\ldots, d_{j}h_{m}$. That is, $M_{\theta} = (A_{ij})_{s \times s}$.
	\[M_\theta = \begin{pmatrix}
		A_{11}&A_{12}&\ldots&A_{1s}\\
		A_{21}&A_{22}&\ldots&A_{2s}\\
		\vdots&\vdots&\vdots&\vdots\\
		A_{s1}&A_{s2}&\ldots&A_{ss}
	\end{pmatrix}.\]
	 If $a_{ij}$ denotes the number of $1$s in $A_{ij}$, then  the \emph{collapsed permutation matrix} corresponding to $\theta$ is $CM_{\theta}$, which is defined as $CM_{\theta}:= (a_{ij})_{s \times s}.$\\[.2cm]
	 \textbf{Note:} The above definitions are not new and are included here for the sake of completeness.
	\begin{lemma}\label{le2}
		Suppose $H$ is a normal subgroup of a group $G$ of index 3, $\vert H \vert$ = m, $D$ is a system of distinct coset representatives for $H$ in $G$, and $\theta$ is an orthomorphism of G. Then the collapsed permutation matrix of $\theta$ with respect to $H$ (and $D$) is
		$$ CM_{\theta} = \begin{pmatrix}
			\alpha&\beta&\gamma\\\beta&\gamma&\alpha\\\gamma&\alpha&\beta
		\end{pmatrix}$$
		for some $\alpha, \beta, \gamma$ such that $\alpha+\beta+\gamma = m$.
	\end{lemma}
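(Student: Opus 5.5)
We count the entries of $CM_{\theta}$ using the three bijections attached to an orthomorphism, via Lemma~\ref{le1}, and pass to the quotient $G/H \cong \mathbb{Z}_3$. Since $|G/H|=3$, the quotient is cyclic of order $3$. We order the coset representatives so that $d_1H=H$, $d_2H$ is a generator $t$ of $G/H$, and $d_3H=t^2$. Any other ordering only permutes the rows and columns simultaneously, so this normalisation costs nothing essential. Writing $\bar{x}$ for the image of $x$ in $G/H$, the entry $a_{ij}$ counts the $x\in d_jH$ with $\theta(x)\in d_iH$, that is, $\bar{x}=t^{j-1}$ and $\overline{\theta(x)}=t^{i-1}$.

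\textbf{Rows and columns.} Because $\theta$ is a bijection, each coset $d_jH$ (of size $m$) is mapped into $G$ injectively, so every column of $CM_\theta$ sums to $m$. Since $\theta^{-1}(d_iH)$ also has $m$ elements, every row of $CM_\theta$ sums to $m$ as well.

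\textbf{Diagonals.} Here we use that $\phi_\theta(x)=x^{-1}\theta(x)$ is a bijection, which is the diagonal condition of Lemma~\ref{le1}. For each $k$, the set $\phi_\theta^{-1}(t^{k}H)$ has exactly $m$ elements. Now $x\in d_jH$ with $\theta(x)\in d_iH$ lies in this set exactly when $t^{i-1}t^{-(j-1)}=t^k$, i.e.\ when $i-j\equiv k \pmod 3$. Normality of $H$ is what makes $\overline{x^{-1}\theta(x)}=\bar{x}^{-1}\overline{\theta(x)}$ well defined, and this is why the argument works for nonabelian $G$. Hence each cyclic diagonal $\{(i,j): i-j\equiv k \pmod 3\}$ of $CM_\theta$ sums to $m$.

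\textbf{Solving the system.} Set $a_{11}=\alpha$, $a_{12}=\beta$, $a_{13}=\gamma$. The first-row condition gives $\alpha+\beta+\gamma=m$. Write $a_{21}=p$, $a_{22}=q$, $a_{23}=r$, so that $p+q+r=m$. The third row is then forced by the column sums: $a_{3j}=m-a_{1j}-a_{2j}$. The three diagonal equations, for $k=0,1,2$, are linear in $p,q,r$. From them we deduce $p=\beta$, $q=\gamma$, $r=\alpha$, and the column sums then give $a_{31}=\gamma$, $a_{32}=\alpha$, $a_{33}=\beta$.

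We do this explicitly. The main diagonal $k=0$ gives $\alpha+q+a_{33}=m$ with $a_{33}=m-\gamma-r$, so $\alpha+q=\gamma+r$. The diagonal $k=1$ consists of the cells $(2,1),(3,2),(1,3)$ and gives $p+(m-\beta-q)+\gamma=m$, so $p+\gamma=\beta+q$. The diagonal $k=2$ consists of the cells $(3,1),(1,2),(2,3)$ and gives $(m-\alpha-p)+\beta+r=m$, so $\beta+r=\alpha+p$. Combining these three equations with $p+q+r=\alpha+\beta+\gamma$, we want to conclude that $(p,q,r)=(\beta,\gamma,\alpha)$.

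\textbf{Main obstacle.} The step I expect to be hardest is this last one. The three diagonal equations sum to a tautology, so together with the row-sum equation they may have rank only $2$, leaving a one-parameter family of solutions. Subtracting the $k=1$ equation from the $k=0$ equation, for instance, gives $\alpha-p=2\gamma-(q+r)+\ldots$, which does not obviously isolate $p$. If this linear system really is underdetermined, the intended resolution should be a counting-mod-$3$ argument in the spirit of Hall--Paige, or an exploitation of the extra structure available. One candidate is that $\theta(e)=e$ forces the cells to be used in a specific way. Another is to apply the same argument to the complete mapping $\phi_\theta$, which is itself a bijection with its own collapsed matrix, to obtain more independent equations. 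I would check first whether the symmetric row/column/diagonal system already forces the circulant form. If it does not, I would add the constraint coming from the collapsed matrix of $\phi_\theta$.
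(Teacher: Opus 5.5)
Your setup is right, and it is the argument the paper imports from Evans. Bijectivity of $\theta$ gives row and column sums $m$. Bijectivity of $\phi_\theta$, pushed down to $G/H\cong\mathbb{Z}_3$ (which is where normality is used), gives sum $m$ on each broken diagonal $i-j\equiv k \pmod 3$. Your three diagonal equations are also correct.

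The gap is that you never finish. You stop at ``we want to conclude'' and suggest the system may be underdetermined, reaching for Hall--Paige, $\theta(e)=e$, or the collapsed matrix of $\phi_\theta$. None of that is needed, and the rank worry is mistaken. The diagonal equations involve only the differences $p-q$, $q-r$, $r-p$, so they have rank $2$. The row equation fixes the sum $p+q+r$, which is independent of those differences, so together they have rank $3$ in $(p,q,r)$.

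Explicitly, the $k=0$ and $k=1$ equations give $r=q+\alpha-\gamma$ and $p=q+\beta-\gamma$. Substituting into $p+q+r=\alpha+\beta+\gamma$ gives $3q=3\gamma$, so $q=\gamma$, $p=\beta$, $r=\alpha$. The column sums then give the third row $(\gamma,\alpha,\beta)$. The garbled line ``$\alpha-p=2\gamma-(q+r)+\ldots$'' should simply be deleted.

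Conceptually, a $3\times 3$ array with constant line sums in the three directions (rows, columns, $i-j$) is forced to be constant along the fourth direction, $i+j \pmod 3$. The homogeneous solution space is exactly the zero-sum functions of $i+j$, which is the two-parameter family in the statement.

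One minor point on ordering. With the paper's convention $d_1=e$, your normalisation is automatic: $d_2H$ is necessarily a generator of $G/H$ and $d_3H=d_2^2H$. If you want to allow arbitrary orderings, your claim that this ``costs nothing'' needs one more observation. Every permutation of $\mathbb{Z}_3$ is affine, so a simultaneous row/column permutation preserves the ``function of $i+j$'' form.
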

	\begin{proof}
		Same as proof of Lemma 2.2 in \cite{evans2}.
	\end{proof}
	\begin{remark}\label{re1}
		If the collapsed permutation matrix of $\theta$ with respect to $H$ (and $D$) is 
		$$ CM_{\theta} = \begin{pmatrix}
		\alpha&\beta&\gamma\\\beta&\gamma&\alpha\\\gamma&\alpha&\beta
	\end{pmatrix},$$
		then we will say that $\theta$ is in the $class~\alpha \beta \gamma$. Clearly,  $class~\alpha \beta \gamma = \{\theta \in \text{Orth}(G): |\theta(H^{\ast}) \cap H^{\ast}|= \alpha - 1,|\theta(H^{\ast}) \cap aH| = \beta, |\theta(H^{\ast}) \cap bH|= \gamma\}$,  where $H^{\ast} = H \setminus \{e\}.$ The cardinality of   $class~\alpha \beta \gamma$  is given by $|\alpha \beta \gamma|$.
	\end{remark}
	\begin{remark}\label{re2}
		If $\theta \in $ Orth($G$), then $\theta^{-1} \in$ Orth($G$) by \cite[Theorem 8.7, p. 207]{evans}. It is easy to observe that for $\theta \in$ Orth($G$), $(M_{\theta})^{t} = M_{\theta^{-1}}$. Thus, $\theta$ and $\theta^{-1}$ belongs to the same class.
	\end{remark}
	\begin{remark}\label{re3}
		For $\theta \in$ Orth($G$), define a collection $\Delta_{ij} = \{\delta \in d_{i}H: \theta(\delta) \in d_{j}H\}.$ Clearly $|\Delta_{ij}| = a_{ji} = a_{ij}$ (By Lemma \ref{le2}). Also, if $\theta \in class~\alpha\beta\gamma$, then $|\Delta_{11}| = |\Delta_{23}| = |\Delta_{32}| = \alpha, |\Delta_{12}| = |\Delta_{21}| = |\Delta_{33}| = \beta$ and $|\Delta_{13}| = |\Delta_{22}| = |\Delta_{31}| = \gamma$. Also, note that $ \displaystyle	 \bigsqcup_{j = 1}^{3}\Delta_{ij} = d_{i}H $ and $\displaystyle \bigsqcup_{i = 1}^{3}\theta(\Delta_{ij}) = d_{j}H$.  
	
	\end{remark}
	\begin{lemma}\label{le3}
		Let $G$ be a group and $H$ its normal subgroup of index 3. Suppose $\theta \in Orth(G)$ and $\psi \in Aut(G)$ such that $\psi(H) = H, \psi(aH) = bH$ and $\psi(bH) = aH$, where $G = H \sqcup aH \sqcup bH.$ Then  $\bm{|\alpha \beta \gamma| = |\alpha \gamma \beta|}$.
	\end{lemma}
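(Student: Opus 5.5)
The plan is to show that the homology $H_{\psi}$, $\theta \mapsto \psi\theta\psi^{-1}$, restricts to a bijection from $class~\alpha\beta\gamma$ onto $class~\alpha\gamma\beta$. By \cite[Theorem 8.6, p. 206]{evans}, $H_{\psi}$ is an automorphism of Orth($G$), so in particular it is a bijection of Orth($G$) onto itself. It also preserves normalization, since $\psi\theta\psi^{-1}(e)=e$. What remains is to compute the class of $\psi\theta\psi^{-1}$.

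I will use the description in Remark \ref{re1}, taking $D=\{e,a,b\}$, so that $d_2H=aH$ and $d_3H=bH$. Write $\theta'=\psi\theta\psi^{-1}$. Since $\psi(H)=H$ and $\psi(e)=e$, we have $\psi(H^{\ast})=H^{\ast}$. Then
\[
\theta'(H^{\ast})\cap H^{\ast}=\psi\bigl(\theta(H^{\ast})\cap H^{\ast}\bigr).
\]
Because $\psi$ maps $aH$ onto $bH$,
\[
\theta'(H^{\ast})\cap bH=\psi\bigl(\theta(H^{\ast})\cap aH\bigr),
\]
and symmetrically
\[
\theta'(H^{\ast})\cap aH=\psi\bigl(\theta(H^{\ast})\cap bH\bigr).
\]
Since $\psi$ is injective, taking cardinalities gives $|\theta'(H^{\ast})\cap H^{\ast}|=\alpha-1$, $|\theta'(H^{\ast})\cap aH|=\gamma$ and $|\theta'(H^{\ast})\cap bH|=\beta$. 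By Lemma \ref{le2} and Remark \ref{re1}, these three numbers determine the whole collapsed matrix, so $\theta'\in class~\alpha\gamma\beta$.

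The same argument applied to $\psi^{-1}$ finishes the proof. Indeed, $\psi^{-1}$ also fixes $H$ and swaps $aH$ and $bH$, and $H_{\psi^{-1}}$ is the inverse of $H_{\psi}$. So $H_{\psi^{-1}}$ maps $class~\alpha\gamma\beta$ into $class~\alpha\beta\gamma$. Hence $H_{\psi}$ is a bijection between the two classes, and $|\alpha\beta\gamma|=|\alpha\gamma\beta|$.

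The only point that needs care is that the class label must not depend on the choice of coset representatives. Since Remark \ref{re1} expresses the class purely in terms of the cosets $H$, $aH$, $bH$, I will always take $D=\{e,a,b\}$ for the very $a,b$ in the hypothesis. With that fixed, there is no real obstacle.
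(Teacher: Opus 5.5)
Your proposal is correct and takes essentially the same approach as the paper. Like the paper, you show $H_{\psi}(\theta)(H^{\ast})=\psi(\theta(H^{\ast}))$, use that $\psi$ fixes $H^{\ast}$ and swaps $aH$ with $bH$ to conclude that $H_{\psi}$ maps \emph{class}~$\alpha\beta\gamma$ into \emph{class}~$\alpha\gamma\beta$, and get the reverse inclusion from $\psi^{-1}$; your explicit note that $H_{\psi}$ is a bijection with inverse $H_{\psi^{-1}}$ just makes the final cardinality step a little more transparent.
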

	\begin{proof}
		Let $G$ be a group and $H$ its normal subgroup of index $3$. Suppose $\theta \in \text{Orth}(G)$ and $\psi(h) = h'$ for some $h,h' \in H^{\ast}$, where $\psi \in Aut(G)$ such that $\psi(H) = H, \psi(aH) = bH$ and $\psi(bH) = aH$, where $G = H \sqcup aH \sqcup bH.$ Now, $(H_{\psi}(\theta))(h')$ = $\psi \theta \psi^{-1}(h')$ = $\psi (\theta(h))$. As a result, we get $(H_{\psi}(\theta))(H^{\ast}) = \psi(\theta(H^{\ast}))$. 		
		 Assume $\theta \in class~\alpha \beta \gamma$. Then, 
		 \begin{align*}
		 	|(H_{\psi}(\theta))(H^{\ast})\cap H^{\ast}|
		 	& =|\psi(\theta(H^{\ast})) \cap H^{\ast}|\\
		 	  &=|\psi(\theta(H^{\ast})\cap \psi^{-1}(H^{\ast}))| \\ &=|\psi(\theta(H^{\ast})\cap H^{\ast})|\\& = \alpha - 1.
		 \end{align*} 
		   Also, 
		   \begin{align*}
		   |(H_{\psi}(\theta))(H^{\ast})\cap aH| &= |\psi(\theta(H^{\ast})\cap \psi^{-1}(aH))|\\& = |\psi(\theta(H^{\ast}) \cap bH)|\\& = \gamma
		\end{align*}
		     and similarly, $|(H_{\psi}(\theta))(H^{\ast})\cap bH|=\beta$.
		     Thus, $H_{\psi}(class~\alpha \beta \gamma)$ $ \subseteq class~\alpha \gamma \beta$.
		      The converse can be proved using $\psi^{-1} \in Aut(G)$. Hence, $|\alpha \beta \gamma| = |\alpha \gamma \beta|.$
	\end{proof}
	\begin{lemma}\label{le5}
		Let G be a group and H its normal subgroup of index $3$. Suppose $\theta \in Orth(G)$ and $\psi \in Aut(G)$ such that $\psi$ preserves the cosets of $H$. Then $H_{\psi}$ will preserve the class of $Orth(G)$, that is $H_{\psi}(class~\alpha\beta\gamma) \subseteq class~\alpha\beta\gamma$. 
	\end{lemma}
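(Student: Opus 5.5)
The plan mirrors the proof of Lemma \ref{le3}, with the coset swap replaced by the identity permutation of cosets. Here $G = H \sqcup aH \sqcup bH$ and $\psi(H)=H$, $\psi(aH)=aH$, $\psi(bH)=bH$. Throughout I will use the characterization in Remark \ref{re1}: $\theta$ lies in class $\alpha\beta\gamma$ exactly when
\[
|\theta(H^{\ast})\cap H^{\ast}|=\alpha-1,\qquad |\theta(H^{\ast})\cap aH|=\beta,\qquad |\theta(H^{\ast})\cap bH|=\gamma .
\]
By Lemma \ref{le2}, these three numbers determine the collapsed matrix and hence the class.

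First I compute the image of $H^{\ast}$ under $H_{\psi}(\theta)=\psi\theta\psi^{-1}$. Since $\psi$ is an automorphism with $\psi(H)=H$ and $\psi(e)=e$, we have $\psi(H^{\ast})=H^{\ast}$, and likewise $\psi^{-1}(H^{\ast})=H^{\ast}$. Hence
\[
(H_{\psi}(\theta))(H^{\ast})=\psi\theta\psi^{-1}(H^{\ast})=\psi(\theta(H^{\ast})).
\]
I also note that $H_{\psi}(\theta)\in \mathrm{Orth}(G)$, which holds because homologies are automorphisms of $\mathrm{Orth}(G)$.

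Next, for each $X\in\{H^{\ast},aH,bH\}$ we have $\psi^{-1}(X)=X$. Since $\psi$ is injective,
\[
|\psi(\theta(H^{\ast}))\cap X| = |\psi(\theta(H^{\ast})\cap \psi^{-1}(X))| = |\theta(H^{\ast})\cap X| .
\]
Taking $X=H^{\ast}$, $aH$ and $bH$ in turn, the three intersection sizes for $H_{\psi}(\theta)$ are $\alpha-1$, $\beta$ and $\gamma$. So $H_{\psi}(\theta)$ is in class $\alpha\beta\gamma$, which is the claimed inclusion.

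There is no real obstacle here. The only point needing care is the step $\psi^{-1}(aH)=aH$, which follows from $\psi(aH)=aH$ because $\psi$ is a bijection. It is also worth remarking that the same argument applied to $\psi^{-1}$ upgrades the inclusion to the equality $H_{\psi}(class~\alpha\beta\gamma)=class~\alpha\beta\gamma$, although only the inclusion is asked.
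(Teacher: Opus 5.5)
Your proof is correct and matches the paper's approach: the paper says only that the result follows from the proof of Lemma \ref{le3}, which computes $(H_{\psi}(\theta))(H^{\ast})=\psi(\theta(H^{\ast}))$ and carries the three intersection counts through $\psi^{-1}$, exactly as you do with the coset swap replaced by the identity. Your closing remark, that applying the argument to $\psi^{-1}$ upgrades the inclusion to an equality, mirrors the ``converse'' step in Lemma \ref{le3}.
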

	\begin{proof}
		Follows from the proof of Lemma \ref{le3}
	\end{proof}
	\begin{lemma}\label{le4}
		Let $G$ be a group and $H$ its normal subgroup of index $3$. Suppose $\theta \in Orth(G)$ and $G = H \sqcup aH \sqcup bH$. Then $\bm{|\alpha \beta 1| = \alpha |1 \alpha \beta|}$.
	\end{lemma}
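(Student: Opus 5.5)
The plan is to realise the identity $|\alpha\beta 1| = \alpha\,|1\alpha\beta|$ as a bijection built from the translations $T_g$. On one side take pairs $(\theta,g)$ with $\theta\in class~1\alpha\beta$ and $g\in\Delta_{12}$, i.e. $g\in H$ with $\theta(g)\in aH$. On the other side take $class~\alpha\beta 1$. Each such $\theta$ has $|\Delta_{12}|=\alpha$ by Remark \ref{re3}, so there are exactly $\alpha|1\alpha\beta|$ pairs. The map will be $(\theta,g)\mapsto T_g[\theta]$, and $T_g[\theta]\in$ Orth$(G)$ by the list of automorphisms of Orth$(G)$ recalled above.

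\textbf{Step 1 (effect of a translation on the collapsed matrix).} Identify $G/H$ with $\mathbb{Z}_3$, labelling $H,aH,bH$ by $0,1,2$; I assume the coset labels are chosen so that $aH$ corresponds to the generator $1$ and $bH=a^{2}H$. Lemma \ref{le2} then says that $a_{ij}$ depends only on $i+j \pmod 3$:
\begin{itemize}
\item $\alpha$ on the cells with $i+j\equiv 0$,
\item $\beta$ on the cells with $i+j\equiv 1$,
\item $\gamma$ on the cells with $i+j\equiv 2$.
\end{itemize}
Let $g$ lie in coset $p$ and $\theta(g)$ in coset $q$. If $x$ lies in coset $i$, then $xg$ lies in coset $i+p$. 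Since $\theta(g)^{-1}$ lies in coset $-q$, the element $T_g[\theta](x)=\theta(xg)\theta(g)^{-1}$ lies in coset $j$ exactly when $\theta(xg)$ lies in coset $j+q$. Hence the collapsed matrix of $T_g[\theta]$ has entries $a'_{ij}=a_{i+p,\,j+q}$; this is just a count of the cells of each block.

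For $p=0$, $q=1$ this shifts the sum index by $1$. A class-$1\alpha\beta$ matrix therefore becomes one with $\alpha'=\alpha$, $\beta'=\beta$, $\gamma'=1$. So $T_g[\theta]\in class~\alpha\beta 1$ whenever $g\in\Delta_{12}$.

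\textbf{Step 2 (invertibility).} I will first check the composition rule $T_h T_g = T_{hg}$ directly from the definition:
$$T_h[T_g\theta](x)=\theta(xhg)\theta(g)^{-1}\bigl(\theta(hg)\theta(g)^{-1}\bigr)^{-1}=\theta(xhg)\theta(hg)^{-1}.$$
In particular $T_{g^{-1}}T_g=\mathrm{id}$. Now set $\varphi=T_g[\theta]$. Then $g^{-1}\in H$ and $\varphi(g^{-1})=\theta(e)\theta(g)^{-1}=\theta(g)^{-1}$, which lies in coset $-1$, i.e. in $bH$. Since $\varphi\in class~\alpha\beta1$, its set $\Delta_{13}$ has exactly one element, so $g^{-1}$ is \emph{the} unique $h\in H$ with $\varphi(h)\in bH$. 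This gives an inverse map $\varphi\mapsto (T_h[\varphi],\,h^{-1})$, where $h$ is that unique element.

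\textbf{Step 3 (checking the inverse lands correctly).} For $\varphi\in class~\alpha\beta 1$ with distinguished $h$, Step 1 applied with $p=0$, $q=2$ shows that $T_h[\varphi]$ has class $1\alpha\beta$, since the shift is $-1$. Moreover $T_h[\varphi](h^{-1})=\varphi(h)^{-1}\in aH$, so $h^{-1}\in\Delta_{12}$ for $T_h[\varphi]$. Thus the two maps are mutually inverse bijections, and counting both sides gives $|\alpha\beta1|=\alpha|1\alpha\beta|$.

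The main obstacle is the bookkeeping in Step 1: I must get the direction of the coset shifts right, including the side on which $\theta(g)^{-1}$ multiplies, and fix the coset labelling consistently with the pattern of Lemma \ref{le2}, which is where the hypothesis $bH=a^2H$ enters. I will verify this on the $3\times3$ pattern before relying on it.
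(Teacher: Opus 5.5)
Your proposal is correct and is essentially the paper's own argument: the translations $T_g$ with $g\in H$ and $\theta(g)\in aH$ carry class $1\alpha\beta$ into class $\alpha\beta1$, and the reverse direction uses the element of $H$ that is sent into $bH$. Your labelling hypothesis $bH=a^{2}H$ is automatic, since $|G/H|=3$. Where you improve on the paper is the counting step. Your explicit inverse $\varphi\mapsto(T_h[\varphi],h^{-1})$, justified by $T_hT_g=T_{hg}$ and by $|\Delta_{13}|=1$ for $\varphi$ in class $\alpha\beta1$, gives injectivity and surjectivity directly. The paper's injectivity step (``$T_h$ does not fix the class'') is only sketched, and that claim is false in the degenerate case $\alpha=\beta=1$.
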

	\begin{proof}
		Let $G$ be a group and $H$ its normal subgroup of index $3$. 
		Suppose $\theta \in class~1\alpha\beta$ and $h \in H^{\ast}$ such that $\theta(h) \in aH$. Then, $T_{h}[\theta](h^{-1}) = \theta(h^{-1}h)\theta(h)^{-1} \in bH$. For, $h_{1} \in H^{\ast}$ such that $\theta(h_{1}h) \in bH$, $T_{h}[\theta](h_{1}) = \theta(h_{1}h)\theta(h)^{-1} \in aH.$ There are $\beta$ such $h_{1} \in H^{\ast}$. For, $h_{2} \in H^{\ast}$ such that $\theta(h_{2}h) \in aH$, $T_{h}[\theta](h_{2}) = \theta(h_{2}h)\theta(h)^{-1} \in H^{\ast}$. From above it is clear that, $T_{h}(class~1\alpha\beta) \subseteq class~\alpha\beta1$. Since the number of $h \in H^{\ast}$ such that $\theta(h) \in aH$, in the $class~1\alpha\beta$ is $\alpha$,  $|\alpha\beta1| \geqslant \alpha|1\alpha\beta|.$
		
		Conversely, suppose $\theta \in class~\alpha\beta1$ and $h \in H^{\ast}$ such that $\theta(h) \in bH$. Then, $T_{h}[\theta](h^{-1}) = \theta(h^{-1}h)\theta(h)^{-1} \in aH.$ For, $h_{1} \in H^{\ast}$ such that $\theta(h_{1}h) \in aH$, $T_{h}[\theta](h_{1}) = \theta(h_{1}h)\theta(h)^{-1} \in bH$. There are $\beta$ such $h_{1} \in H^{\ast}$.  For, $h_{2} \in H^{\ast}$ such that $\theta(h_{2}h) \in H^{\ast}$, $T_{h}[\theta](h_{2}) = \theta(h_{2}h)\theta(h)^{-1} \in aH$. There are $\alpha-1$ such  $h_{2} \in H^{\ast}.$ Thus, $T_{h}(class~\alpha\beta1) \subseteq class~1\alpha\beta$. From above it is clear that, for $h \in H^{\ast}$, the map $T_{h}$ do not fix \textit{class}. Consequently, $T_{h_{1}}(\theta_{1}) \neq T_{h_{2}}(\theta_{2})$, where $\theta_{1}\neq\theta_{2} \in class~1\alpha\beta$ and  $h_{i} \in H^{\ast}$. If not, then $T_{h_{1}}(\theta_{1}) = T_{h_{2}}(\theta_{2})$ will implies that $T_{h_{2}^{-1}h_{1}}(\theta_{1}) = \theta_{2},$ which is a contradiction as both $\theta_{1}$ and $\theta_{2}$ belongs to the  same \textit{class}. Hence, $|\alpha \beta 1| = \alpha|1\alpha \beta|$.
	\end{proof}
	
	\section{The Alternating group $A_4$}
	Let $G = A_{4}$. Take $x,y,a,b \in A_{4}$ such that $\vert x \vert = \vert y \vert  = 2$, $\vert a \vert = 3$ and $b = a^{-1}$. Take $H = \langle x,y \rangle$ and let $D = \{d_{1}=e, d_{2}=a, d_{3}=b\}$ be ordered set  of distinct coset representatives of $H$ in $G$. Then, elements of $G$ can be ordered as $\{e,x,y,xy; a,ax,ay,axy; b,bx,by,bxy\}$. Let $\theta \in$ Orth($G$). Then, by Lemma \ref{le2}, the collapsed permutation matrix of $\theta \in$ Orth($G$) with respect to $H$ (and $D$) is ,  	$$ CM_{\theta} = \begin{pmatrix}
		\alpha&\beta&\gamma\\\beta&\gamma&\alpha\\\gamma&\alpha&\beta
	\end{pmatrix}$$ $\text{for some }\alpha,\beta,\gamma$ such that $\alpha+\beta+\gamma = 4$. Therefore,  each normalized orthomorphism of $G$ belongs to one of the classes $400, 220, 202, 211,$ $ 112, 121, 130,$ $ 103, 310, 301$.
	
	For the sake of convenience, take $x = (1~2)(3~4), y = (1~3)(2~4)$ and $a = (1~2~3)$.  Define a bijection $\sigma$ on $H$ given by $\sigma = (x,y,xy)$.
	\begin{remark}\label{re4}\hfill
		\begin{enumerate}
			\item [(a)] For $h \in H^{\ast}$, $ha = a\sigma(h)$ and $hb = b\sigma^{-1}(h).$
			\item [(b)] If two elements of a coset $aH (bH)$ differ by $h \in H^{\ast},$ then remaining two elements of the coset $aH(bH)$ also differ by same $h \in H^{\ast}.$
		\end{enumerate}
	\end{remark}
	
	\begin{lemma}\label{le6} Let $\theta \in Orth(A_{4})$. Then following holds: 
		\begin{itemize}
			\item [(i)] $\bm{\vert 130 \vert = \vert 103\vert}$,	
			\item [(ii)] $\bm{\vert 301 \vert = \vert 310\vert}$,
			\item [(iii)] $\bm{\vert 202 \vert = \vert 220\vert}$,
			\item [(iv)] $\bm{\vert 112 \vert = \vert 121\vert}$.	
		\end{itemize}
	\end{lemma}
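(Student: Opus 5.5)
All four equalities have the form $|\alpha\beta\gamma| = |\alpha\gamma\beta|$. So the plan is to apply Lemma \ref{le3} once, with a suitable automorphism $\psi$ of $A_4$. We need $\psi$ to fix $H$ and to swap the cosets $aH$ and $bH$. The natural choice is conjugation by an odd permutation of $S_4$, since $\mathrm{Aut}(A_4)\cong S_4$ acts on $A_4$ by conjugation.

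Concretely, take $\psi = i_{\tau}$ restricted to $A_4$, with $\tau = (1~2) \in S_4$.

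First, $H$ is the Klein four-group $V_4$, which is normal in $S_4$. Hence $\psi(H) = H$.

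Second, $\psi(a) = (1~2)(1~2~3)(1~2) = (2~1~3) = (1~3~2) = a^{-1} = b$. Since $\psi$ is a homomorphism fixing $H$ setwise, $\psi(aH) = \psi(a)\psi(H) = bH$. In the same way $\psi(bH) = \psi(b)H = b^{-1}H = aH$.

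So $\psi$ satisfies the hypotheses of Lemma \ref{le3}, and therefore $|\alpha\beta\gamma| = |\alpha\gamma\beta|$ for every admissible triple with $\alpha+\beta+\gamma = 4$. Specializing the triple gives each part:
\begin{itemize}
\item $(\alpha,\beta,\gamma) = (1,3,0)$ gives (i);
\item $(3,0,1)$ gives (ii);
\item $(2,0,2)$ gives (iii);
\item $(1,1,2)$ gives (iv).
\end{itemize}

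The only point needing care is the quotient-level verification that $\psi$ swaps the two nontrivial cosets rather than fixing them. This comes down to the single computation $\psi(a) = a^{-1}$. Inner automorphisms of $A_4$ cannot do this, since $G/H \cong \mathbb{Z}_3$ is abelian and conjugation by an element of $A_4$ therefore fixes each coset. This is why we need the outer automorphism coming from an odd permutation.
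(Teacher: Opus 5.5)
Your proof is correct and is essentially the paper's own argument: apply Lemma~\ref{le3} with conjugation by a transposition of $S_4$, which fixes $H$ and swaps $aH$ and $bH$. The paper uses $i_{(2~3)}$ where you use $i_{(1~2)}$; both send $a=(1~2~3)$ to $a^{-1}$, and your explicit check of $\psi(a)=a^{-1}$ and of the coset swap supplies a verification the paper only asserts.
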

	\begin{proof}
	 Since Aut($A_4$) is $S_4$, $i_{(2~3)} \in$ Aut($A_4$). Thus, $H_{i_{(2~3)}}$ is an homology. Also, $i_{(2~3)}(H) = H, i_{(2~3)}(aH) = bH$ and $i_{(2~3)}(bH) = aH$.  Thus proof follows from  Lemma \ref{le3}.
	\end{proof}

\begin{lemma} \label{le7}
	Let $\theta \in Orth(A_{4})$. Then following relations between classes holds:
		\begin{enumerate} 
		\item[(i)] 	$\bm{\vert 211 \vert = 2\vert 121 \vert}$,
		\item[(ii)] 	$\bm{\vert 301 \vert = 3\vert 130 \vert}$.
	\end{enumerate}
\end{lemma}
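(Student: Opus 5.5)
This is a direct specialization of Lemma \ref{le4} to $A_4$, with $H=\langle x,y\rangle$, $a=(1~2~3)$ and $b=a^{-1}$, so that $A_4=H\sqcup aH\sqcup bH$ and $|H|=4$. Lemma \ref{le4} is stated for any group with a normal subgroup of index $3$. It gives $|\alpha\beta 1| = \alpha\,|1\alpha\beta|$ for every admissible pair $(\alpha,\beta)$, meaning $\alpha+\beta+1=4$ with $\alpha,\beta\geq 0$. The plan is to substitute the two relevant pairs and check that each resulting class is among the ten classes listed for $A_4$.

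For (i), take $\alpha=2$ and $\beta=1$. Lemma \ref{le4} then gives $|211| = 2\,|121|$, which is (i) exactly. For (ii), take $\alpha=3$ and $\beta=0$. Lemma \ref{le4} then gives $|301| = 3\,|130|$, which is (ii).

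The only thing to verify is that these specializations are legitimate, in two respects. First, the classes $121$, $211$, $130$ and $301$ all occur among the classes $400,220,202,211,112,121,130,103,310,301$ obtained from Lemma \ref{le2}. Second, the counting in the proof of Lemma \ref{le4} does not break down when $\beta=0$. That proof starts from $\theta\in class~1\alpha\beta$ and an $h\in H^{\ast}$ with $\theta(h)\in aH$. In class $130$ there are exactly $3$ such $h$, since all of $H^{\ast}$ is sent into $aH$. The translate $T_h[\theta]$ then sends $h^{-1}$ into $bH$, sends no elements into $aH$ because $\beta=0$, and sends the remaining two elements of $H^{\ast}$ into $H^{\ast}$. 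So $T_h[\theta]$ lies in class $301$, and the injectivity argument goes through unchanged.

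I expect no real obstacle here. The only point needing care is to confirm that the edge case $\beta=0$ does not use any nonemptiness assumption in the proof of Lemma \ref{le4}. A second route to (ii) is Lemma \ref{le6}(i)–(ii) together with the reversed relation, but that is not needed.
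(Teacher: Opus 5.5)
Your proposal is correct and matches the paper's proof, which is simply ``Follows from Lemma \ref{le4}'' applied with $(\alpha,\beta)=(2,1)$ and $(\alpha,\beta)=(3,0)$. Your extra check of the $\beta=0$ case, that $T_h[\theta]$ still lands in class $301$ and the counting is unaffected, is a reasonable addition that the paper leaves implicit.
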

\begin{proof}
		Follows from Lemma \ref{le4}.
\end{proof}

		\begin{theorem}\label{th1}
	The number of orthomorphisms in	$class~400 = 128$.
	\end{theorem}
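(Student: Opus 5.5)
The plan is to show that an orthomorphism in $class~400$ splits into three independent pieces, one for each coset of $H$, and then to count each piece by hand inside $H\cong\mathbb{Z}_2\times\mathbb{Z}_2$.

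\textbf{Step 1: the coset structure.} Let $\theta\in class~400$, so $\alpha=4$ and $\beta=\gamma=0$. By Remark \ref{re3}, $\theta(H)=H$, $\theta(aH)=bH$ and $\theta(bH)=aH$. The complete mapping $\phi_\theta(g)=g^{-1}\theta(g)$ then sends each coset into a single coset:
\begin{itemize}
\item for $g\in H$ we get $\phi_\theta(g)\in H$;
\item for $g\in aH$ we get $\phi_\theta(g)\in a^{-1}bH=aH$, since $a^{-1}b=b^2=a$;
\item for $g\in bH$ we get $\phi_\theta(g)\in b^{-1}aH=bH$.
\end{itemize}
Because $\phi_\theta$ is a bijection of $G$, it must restrict to a bijection on each coset. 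Conversely, take any $\theta$ that maps $H\to H$, $aH\to bH$, $bH\to aH$ bijectively and whose $\phi_\theta$ is a bijection on each coset. Such a $\theta$ is an orthomorphism, by Lemma \ref{le1} or directly, and it lies in $class~400$. Hence $|400|$ is the product of three independent counts $N_H\cdot N_a\cdot N_b$.

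\textbf{Step 2: the piece on $H$.} Here $\theta|_H$ is a normalized orthomorphism of $H\cong\mathbb{Z}_2^2$. Direct check shows these are exactly the two automorphisms cycling $x,y,xy$, namely $\sigma$ and $\sigma^{-1}$. So $N_H=2$.

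\textbf{Step 3: the piece on $aH$.} Write $\theta(ah)=b\,f(h)$ with $f$ a permutation of $H$. Then
$$\phi_\theta(ah)=h a^{-1}b f(h)=h a f(h)=a\,\sigma(h)f(h),$$
using $h^{-1}=h$, Remark \ref{re4}(a), and $\sigma(e)=e$. So we need both $f$ and $h\mapsto\sigma(h)f(h)$ to be permutations of $H$. Since $\sigma$ is an automorphism of $H$ (it permutes the nonidentity elements of $\mathbb{Z}_2^2$ cyclically), substitute $f=\sigma\circ k$. The condition becomes: $k$ is a permutation with $h\mapsto hk(h)$ bijective, i.e.\ $k$ is an orthomorphism of $\mathbb{Z}_2^2$, not necessarily normalized. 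Such $k$ are the $2$ normalized ones composed with the $4$ translations, giving $8$. So $N_a=8$. The symmetric computation on $bH$, using $hb=b\sigma^{-1}(h)$, gives $N_b=8$.

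\textbf{Conclusion and main obstacle.} Therefore $|400|=2\cdot 8\cdot 8=128$. The step I expect to need the most care is Step 3. There the commutation rule of Remark \ref{re4}(a) must be applied correctly to bring $\phi_\theta$ on $aH$ and $bH$ into the form ``$\sigma$ (or $\sigma^{-1}$) plus $f$''. One then has to confirm that the count of non-normalized orthomorphisms of $\mathbb{Z}_2^2$ is $8$, and check that translating a normalized orthomorphism of $\mathbb{Z}_2^2$ again gives an orthomorphism, so that these $8$ maps are distinct and exhaust all cases.
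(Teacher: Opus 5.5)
Your proof is correct and takes the same route as the paper, which likewise factors $|400|$ as $2\cdot 8\cdot 8$ over the independent blocks $A_{11}$, $A_{32}$, $A_{23}$. You also justify two things the paper only asserts: that the orthomorphism conditions decouple across the cosets, and that each off-diagonal block has exactly $8$ choices (via the substitution $f=\sigma\circ k$, which reduces the count to the $8$ orthomorphisms of $\mathbb{Z}_2^2$).
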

	\begin{proof}
		Assume $\theta \in class~400$, and $A_{ij}, 1 \leq i,j \leq 3$ are block matrices as defined in Section 2.1. Then $A_{11}$ is uniquely determined by fixing image of $x$ under $\theta$. There are two choices for $\theta(x)$, these are $y$ and $xy$. There are 8 choices for each $A_{32}$ and $A_{23}$. Hence, $|400| = 2 \times 8 \times 8 = 128.$ 
	\end{proof}

	\begin{theorem}\label{th2}
		There is no orthomorphisms in $class~220$.
	\end{theorem}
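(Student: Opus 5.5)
The plan is to assume $\theta\in class~220$ and derive a contradiction from the diagonal condition of Lemma \ref{le1}, after first using symmetry to reduce the number of cases. By Remark \ref{re3}, the block pattern of $M_\theta$ is
\[
\begin{pmatrix} 2&2&0\\2&0&2\\0&2&2\end{pmatrix}.
\]
So $\theta$ fixes $e$, sends exactly one $h\in H^{\ast}$ into $H^{\ast}$, sends the other two elements of $H^{\ast}$ into $aH$, and sends nothing from $H$ into $bH$. The remaining blocks are fixed the same way: two elements of $aH$ go to $H$ and two go to $bH$, while two elements of $bH$ go to $aH$ and two go to $bH$.

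The first step is to record which coset each value of $\phi_\theta(z)=z^{-1}\theta(z)$ lands in, using Remark \ref{re4}(a) ($ha=a\sigma(h)$, $hb=b\sigma^{-1}(h)$).
\begin{itemize}
\item The pairs $H\to H$ and $bH\to bH$ give values in $H$.
\item The pairs $H\to aH$ and $aH\to bH$ give values in $aH$.
\item The pairs $aH\to H$ and $bH\to aH$ give values in $bH$.
\end{itemize}
Each coset receives exactly four values, so the coarse count gives no contradiction. The argument therefore has to work inside cosets, comparing $H$-components.

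Concretely, the $H\to H$ part gives $\phi_\theta(e)=e$ and $\phi_\theta(h)=h^{-1}h'$, where $h'=\theta(h)\neq h$. Hence $h^{-1}h'$ is the third element $h''$ of $H^{\ast}$. The two pairs $bk\mapsto bk'$ in the $bH\to bH$ block give $\phi$-values $kk'$, and these must be exactly $\{h,h'\}$.

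Before the case analysis, I would reduce by symmetry using Lemma \ref{le5}. The homologies $H_{i_g}$ with $g\in A_4$ preserve the cosets of $H$, hence preserve $class~220$. Conjugation by $a$ permutes $H^{\ast}$ cyclically via $\sigma$. So we may assume $h=x$, and then $\theta(x)\in\{y,xy\}$, which leaves two cases.

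In each case, I would propagate the constraints block by block. The two elements $y,xy$ go to $aH$, and their $\phi$-values $a\sigma(\cdot)^{-1}k$ must be distinct. These two values, together with the two values from the $aH\to bH$ block, must fill $aH$ exactly. The analogous condition holds in $bH$ for the blocks $aH\to H$ and $bH\to aH$. Remark \ref{re4}(b), that two elements of a coset differing by $h$ forces the other two to differ by the same $h$, should quickly pin down which pairs of elements can share a block. It should then show that the requirement $\{kk'\}=\{h,h'\}$ in the $bH\to bH$ block clashes with the $aH$ or $bH$ diagonal conditions.

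The main obstacle is organizing this case analysis so that it does not balloon. As a shortcut, I would first try a product argument. Write each $\phi$-value in the normal form $d_ik$ and take the product of the $H$-components over each coset, reducing modulo the commutation rule of Remark \ref{re4}(a). Since $x\cdot y\cdot xy=e$, the full set of $H$-components has product $e$ in each coset. If that product computes to something forced to be nontrivial in the $220$ pattern, this would give a Hall--Paige-style parity contradiction directly. If it does not, I would fall back on the explicit two-case check described above.
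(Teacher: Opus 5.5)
Your setup is sound, and it matches the paper's opening moves: the block pattern of $M_\theta$, the cosets in which the values of $\phi_\theta$ fall, the condition that the two $bH\to bH$ pairs give $\phi_\theta$-values $\{h,h'\}$, and the reduction by $H_{i_a}$ to $h=x$ with $\theta(x)\in\{y,xy\}$. (The paper also uses $R$ to merge the two cases.)

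The gap is that the proposal stops exactly where the proof has to begin. The contradiction is only predicted (``should pin down'', ``should show''), never derived, so $class~220$ has not yet been excluded. The product shortcut, as you formulate it, does not supply the contradiction either. Suppose you express, coset by coset, the product of the $H$-components of the $\phi_\theta$-values through products of $H$-components over the six blocks of the $220$ pattern. The relations you get are satisfiable: after elimination they reduce to the single condition $up=xh'$ in the notation below. That each coset total is $e$ is automatic and carries no obstruction. The missing idea is distinctness inside each two-element block. The two domain $H$-components, the two image $H$-components and the two $\phi_\theta$ $H$-components of such a block are distinct elements of $H$, so each of these block products is $\neq e$.

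With that ingredient your idea does close, with less casework than the paper. Put $h'=\theta(x)$. Let $u$ (resp.\ $v$) be the product of the $H$-components of the two elements of $aH$ sent into $H$ (resp.\ of $bH$ sent into $aH$). Let $p$ (resp.\ $q$) be the product of the $H$-components of $\theta(y),\theta(xy)$ (resp.\ of the two images in $bH$ of elements of $aH$). Using $ha=a\sigma(h)$ and $hb=b\sigma^{-1}(h)$, the $aH$-, $bH$- and $H$-parts of $\phi_\theta$ give $\sigma(x)p\sigma(u)q=e$, $\sigma^{-1}(u)h'\sigma^{-1}(v)p=e$ and $vq=xh'$. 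Since $w\sigma(w)=\sigma^{2}(w)$ on $H$, these combine to $up=xh'$. Block distinctness gives $u,v,p,q\neq e$, $\sigma(x)p\neq e$ and $\sigma^{-1}(u)h'\neq e$, so $p\in\{x,xy\}$.
\begin{itemize}
\item For $h'=y$: this forces $p=x$ and $u=y$, and then $q=\sigma(x)\,x\,\sigma(y)=e$, a contradiction.
\item For $h'=xy$, $p=x$: this gives $u=xy$ and $v=u\sigma(h')\sigma(p)=e$, a contradiction.
\item For $h'=xy$, $p=xy$: this gives $u=x$ and $\sigma^{-1}(u)h'=e$, a contradiction.
\end{itemize}

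The paper closes differently. It further normalizes with $H_{i_h}$, $h\in H$, to $\theta(x)=y$, $\theta(y)=a$, $\theta(xy)\in\{ax,axy\}$. It then takes the elements $ah_2,ah_3$ of $aH$ sent onto $\{x,xy\}$ and shows $h_2h_3\neq xy$. For $h_2h_3=x$ it uses Remark \ref{re4}(b) to force the two elements of $bH$ sent into $aH$ to coincide. For $h_2h_3=y$ it uses the same remark to force the two elements of $aH$ sent into $bH$ to have the same image.
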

	\begin{proof}
		Consider $class~220 = s(x,y) \sqcup s(x,xy) \sqcup s(xy,x) \sqcup s(y,x) \sqcup s(xy,y) \sqcup s(y,xy)$,  where $s(x,y) = \{\theta \in \text{Orth}(G): \theta(x)=y, \theta(y) \in aH, \theta(xy) \in aH\}$.
		 Since for every $g \in G$, $i_{g} \in$ Aut($G$) such that $i_{g}$ is coset preserving. Thus, by Lemma \ref{le5}, $H_{i_{g}}(s(h_{1},h_{2})) = s(i_{g}(h_{1}), i_{g}(h_{2}))$, where $h_{i} \in H^{\ast}$. In particular, $H_{i_{a}}(s(x,y)) = s(xy,x)$,  $H_{i_{b}}(s(x,y)) = s(y,xy),$  $H_{i_{a}}(s(y,x)) = s(x,xy)$ and $H_{i_{b}}(s(y,x)) = s(xy,y)$. Also, $R(s(x,y)) = s(x,xy)$. As a result,  $\vert s(y,x) \vert = \vert s(xy,y) \vert$  $= \vert s(x,xy) \vert$ $= \vert s(x,y) \vert$ $= \vert s(xy,x) \vert$ $= \vert s(y,xy) \vert$.
		 
		  Consider $s(x,y,h_{1},h_{2}) = \{\theta \in \text{Orth}(G): \theta(x) = y, \theta(y) = ah_{1}, \theta(xy) = ah_{2}\}$, where $h_{i}  \in H$. Clearly $h_{1} \neq h_{2}$ and $\phi_{\theta}(y) \neq \phi_{\theta}(xy)$. As a result, we get $h_{1}h_{2} \neq y$. Thus, we have $4$ choices for $h_{1}$ and $2$ choices for $h_{2}$. Observe that we have $8$ subclasses of $s(x,y)$. If $\kappa$ denotes inner automorphism by an element of $H^{\ast},$ then $H_{\kappa}(s(x,y,h_{1},h_{2})) = s(x,y,a^{-1}\kappa(a)h_{1},a^{-1}\kappa(a)h_{2})$. As a result, $H_{i_{x}}(s(x,y,h_{1},h_{2})) = s(x,y,xyh_{1},xyh_{2})$, $H_{i_{y}}(s(x,y,h_{1},h_{2})) = s(x,y,xh_{1},xh_{2})$ and $H_{i_{xy}}(s(x,y,h_{1},h_{2})) = s(x,y,yh_{1},yh_{2})$. Then, under the action of group $\{H_{i_{x}}| x \in H\}$ on $\{s(x,y,h_{1},h_{2})| h_{i} \in H,h_{1} \neq h_{2}, h_{1}h_{2} \neq y)\}$ we get following two orbits of length $4$.
		  	\begin{align*}
		  	\text{ Orbit 1}  =\{&s(x,y,e,x), s(x,y,xy,y), s(x,y,y,xy), s(x,y,x,e).\}\\
		  	\text{	Orbit 2}  =	\{&s(x,y,e,xy), s(x,y,xy,e), s(x,y,y,x), s(x,y,x,y).\}
		  		  \end{align*}
	   Thus, we have $\vert s(x,y) \vert = 4\vert s(x,y,e,x) \vert + 4\vert s(x,y,e,xy) \vert$. Now we will show that $|s(x,y,e,h)| = 0$ where $h \in \{x,xy\}$. Take $\theta \in s(x,y,e,x)$. Since $\theta \in class~220,$ so $|\theta(H^{\ast}\cap aH)| = |\theta(aH) \cap H^{\ast}| = 2$. That is there exists $h_{1},h_{2},h_{3} \in H$ such that $\theta(ah_{2}) = h_{1}$ and $\theta(ah_{3}) = h_{1}y.$ Clearly, $h_{2} \neq h_{3}$ and $h_{1} \neq e ,y$. The following table shows the value of $\theta$ and $\phi_{\theta}$ for some elements of  $G$.
		\begin{table}[H]
		\begin{center}
			\begin{tabular}{|c| c| c| c| c| c| c|}
			\hline
			$z$ & $e$ & $x$ & $y$ & $xy$ & $ah_{2}$& $ah_{3}$\\\hline
			$\theta(z)$& $e$ & $y$ & $a$ & $ax$ & $h_{1}$ & $h_{1}y$ \\ \hline
			$\phi_{\theta}(z)$ & $e$ & $xy$ & $axy$& $a$&$h_{2}bh_{1}$& $h_{3}bh_{1}y$\\ \hline
		\end{tabular}\caption{}\label{t1}
		\end{center}
	\end{table}
	\vspace{-.8cm}
	From Table \ref{t1}, \[ \phi_{\theta}(ah_{2}) = h_{2}bh_{1} \text{ and } \phi_{\theta}(ah_{3}) = h_{3}bh_{1}y  \tag{$\star$}
	\]
	Clearly, $h_{2}bh_{1} \neq h_{3}bh_{1}y.$ Thus $h_{2}h_{3} \neq xy.$
	
	 Take $h_{2}h_{3} = x$. Now  by ($\star$), 
	\begin{align*}
		\phi_{\theta}(ah_{3}) =&~ xh_{2}bh_{1}y \\
		=&~ h_{2}bh_{1}\sigma^{-1}(x)y \text{  (by Remark \ref{re4})}\\
		=&~ \phi_{\theta}(ah_{2})x.
	\end{align*}
	   Since $\theta \in class~220, |\theta(bH) \cap aH| = 2.$ So, there exists $\epsilon, \eta \in bH$ such that $\theta(\eta), \theta(\epsilon) \in aH \setminus \{a, ax\}.$ Thus, by Remark \ref{re4}, $\theta(\eta)x = \theta(\epsilon)$. Also, $\phi_{\theta}(\epsilon), \phi_{\theta}(\eta) \in bH \setminus \{\phi_{\theta}(ah_{2}),\phi_{\theta}(ah_{3})\}$. Clearly, $\phi_{\theta}(\epsilon)x = \phi_{\theta}(\eta)$.
Now, from the definition of $\phi_{\theta}$, we  get $\epsilon = \eta$,  which is a contradiction.

		Take $h_{2}h_{3} = y$. Assume $\lambda$, $\tau \in aH$ such that $\theta(\lambda), \theta(\tau) \in bH$. Clearly, $\lambda, \tau \in aH \setminus\{ah_{2},ah_{3}\}$ and  $\phi_{\theta}(\lambda), \phi_{\theta}(\tau) \in aH \setminus \{\phi_{\theta}(y), \phi_{\theta}(xy)\}$. Thus, by Remark \ref{re4}, $\lambda y, = \tau$ and $\phi_{\theta}(\lambda)xy  = \phi_{\theta}(\tau).$ Now, from the definition of $\phi_{\theta}$, we get $\theta(\lambda) = \theta(\tau)$, which is a contradiction. Therefore, there is no orthomorphism in this \textit{class}, when $h=x$.
		Similarly, when $h=xy$ we will not get any orthomorphism. 
		Hence, $\vert 220 \vert = 0$.
		
	\end{proof}

	\begin{theorem}\label{th3}
		The number of orthomorphisms in	$class~130 = 32$.
	\end{theorem}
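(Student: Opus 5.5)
We determine $|130|$ by splitting $class~130$ into subclasses according to where $H^{\ast}$ is sent, using homologies to show the subclasses have equal size, and then counting one subclass by hand. By Remark \ref{re3}, $\theta \in class~130$ means the following:
\begin{itemize}
\item $|\Delta_{12}|=3$, so $\theta(H^{\ast})$ consists of three of the four elements of $aH$;
\item $|\Delta_{21}|=3$ and $|\Delta_{23}|=1$, so three elements of $aH$ go to $H^{\ast}$ and one goes to $bH$;
\item $|\Delta_{32}|=1$ and $|\Delta_{33}|=3$, so exactly one element of $bH$ goes to $aH$ and three stay in $bH$.
\end{itemize}
In particular, the unique element of $aH$ missed by $\theta(H^{\ast})$ is $\theta(\eta)$ for the unique $\eta \in bH$ with $\theta(\eta)\in aH$.

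First, partition $class~130=\bigsqcup_{h\in H} t(h)$, where $t(h)=\{\theta\in class~130 : ah\notin\theta(H^{\ast})\}$. By Lemma \ref{le5}, $H_{i_g}$ preserves the class for every $g\in G$. For $\kappa=i_{h'}$ with $h'\in H^{\ast}$, $H_\kappa(\theta)(H^{\ast})=\kappa(\theta(H^{\ast}))$ and $\kappa(ah)=a\,(a^{-1}\kappa(a))\,\kappa(h)$. The group $\{i_{h'}: h'\in H\}$ acts on $aH$ as the regular action of $H$, exactly as in the computation of the orbits in the proof of Theorem \ref{th2}. Hence all four sets $t(h)$ have the same size, and $|130|=4|t(e)|$. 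The remaining task is to show $|t(e)|=8$.

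Second, count $t(e)$. Here $\theta$ maps $\{x,y,xy\}$ bijectively onto $\{ax,ay,axy\}$. The complete-mapping values are $\phi_\theta(h)=h\theta(h)$, and these must be distinct elements of $aH$. Using Remark \ref{re4}, I expect this to cut the $6$ bijections down to a small list. The conjugation $H_{i_a}$ permutes $H^{\ast}$ cyclically by $\sigma$ and fixes the coset $aH$, though it need not fix $a$ itself, so I will check which of these maps preserve $t(e)$. I will also check whether $\theta\mapsto\theta^{-1}$ (Remark \ref{re2}) or $R$ preserves $t(e)$. These symmetries should reduce the count to one or two starting assignments.

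For each surviving assignment on $H^{\ast}$, I will complete $\theta$ on the remaining cosets:
\begin{itemize}
\item on $aH$, choose which element goes to $bH$; the other three go bijectively onto $H^{\ast}$;
\item on $bH$, the unique element $\eta$ with $\theta(\eta)\in aH$ must satisfy $\theta(\eta)=a$, and the other three go onto the remaining three elements of $bH$.
\end{itemize}
At each step I will enforce that the $\phi_\theta$-values are distinct within each coset. Remark \ref{re4}(b) gives a quick test: within $aH$ or $bH$, two pairs of elements differing by the same $h$ force coincidences in $\phi_\theta$, just as in the contradictions in the proof of Theorem \ref{th2}.

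The main obstacle is this last completion step: the bookkeeping of which $\phi_\theta$-values in $aH$ and $bH$ are already used, across the branches created by choosing the element of $aH$ sent to $bH$ and the element $\eta$ of $bH$. I would organise it as a small table in the style of Table \ref{t1}, listing $\theta(z)$ and $\phi_\theta(z)$, and aim to show each admissible start on $H^{\ast}$ extends in a fixed number of ways, giving $|t(e)|=8$ and hence $|130|=32$. As a consistency check on the final count, Lemma \ref{le6}(i) and Lemma \ref{le7}(ii) then give $|103|=32$ and $|301|=|310|=96$.
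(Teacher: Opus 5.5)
Your reduction is correct and differs mildly from the paper's. You partition by the element of $aH$ missed by $\theta(H^{\ast})$ and use that $\{H_{i_{h'}} : h'\in H\}$ translates this element regularly, which gives $|130|=4|t(e)|$. The paper instead splits the class into eight subclasses $r(h_1,h_2)$ by the full assignment on $H^{\ast}$ and shows they all have the same size; in its notation $t(e)=r(x,y)\sqcup r(xy,x)$. However, your proposal stops exactly where the content of the theorem begins. The value $|t(e)|=8$ is stated as a target (``I expect'', ``aim to show''), and nothing derives it, nor the claim that every admissible start extends in the same number of ways. This is a genuine gap, since all the numerical content lies in that count.

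To close it, two facts are needed.

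First, of the six bijections $H^{\ast}\to\{ax,ay,axy\}$, only $(\theta(x),\theta(y),\theta(xy))=(ax,ay,axy)$ and $(axy,ax,ay)$ give distinct $\phi_\theta$-values. Contrary to your hedge, $i_a(a)=a$, so $H_{i_a}$ does preserve $t(e)$. But it fixes each of these two starts, so it gives no reduction. The reflection $R$ swaps them, because $R[\theta](h)=\phi_\theta(h)$ for $h\in H$.

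Second, and this is the idea missing from your completion step, the paper proves a rigidity statement that removes the branching you were worried about: $\theta(\Delta_{33})=\Delta_{33}$. A clean way to see it: the three elements of $\Delta_{33}$ have $\phi_\theta$-values exactly $H^{\ast}$, and $x\cdot y\cdot xy=e$. Hence the products of the $H$-components of $\Delta_{33}$ and of $\theta(\Delta_{33})$ agree, so both sets omit the same element of $bH$. From this the paper deduces that $\theta$ swaps the element $\lambda\in aH$ sent to $bH$ with the element $\eta\in bH$ sent to $aH$.

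In $t(e)$ this gives the following:
\begin{itemize}
\item $\lambda=a$ and $\eta=b$, consistent with $\phi_\theta(a)=a$ being the only unused value in $aH$.
\item $\theta$ permutes $\{bx,by,bxy\}$ by one of its two $3$-cycles, and both are admissible.
\item There are exactly two bijections $\{ax,ay,axy\}\to H^{\ast}$ whose $\phi_\theta$-values are distinct and different from $\phi_\theta(b)=b$.
\end{itemize}
That is $2\cdot 2=4$ completions per start, so $|t(e)|=8$ and $|130|=32$. Without this rigidity, or an equally explicit exhaustive check, the branch-by-branch bookkeeping you describe remains undone, and the count is not proved.
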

	\begin{proof}
			For $h_{1},h_{2} \in H,$ consider $r(h_{1},h_{2}) =  \{\theta \in \text{Orth}(G): \theta(x)=ah_{1}, \theta(y)$ $ = ah_{2}, \theta(xy) = ah_{3}, h_{3} \in H\}$. Then $r(h_{1},h_{2}) \subseteq class~130$. Clearly, $h_{1} \neq h_{2} \neq h_{3}$ and $\phi_{\theta}(x) \neq \phi_{\theta}(y) \neq \phi_{\theta}(xy).$ Thus, $h_{1}h_{2} \neq x, h_{1}h_{3} \neq xy$ and $h_{2}h_{3} \neq y$. Clearly, we have $4$ choices for $h_{1}$ and $2$ choices for $h_{2}$. Therefore, $class~130 = \sqcup \{r(h_{1},h_{2}) : h_{1} \neq h_{2}, h_{1}h_{2} \neq x\}$, that is, there are $8$ sub classes of $r(h_{1},h_{2})$. For $h_{1} = e$ and $h_{2} = y$, it is clear that $h_{3} = x$.
			
			  Let $\psi \in Aut(G)$ such that $\psi$ preserves the cosets of $H$. Then, $H_{\psi}(r(h_{1},h_{2})) = r(a^{-1}\psi(ah_{2}), a^{-1}\psi(ah_{3}))$. In particular,
		$H_{i_{a}}(r(e,y)) = r(x,xy)$,
		$H_{i_{b}}(r(e,y)) = r(y,e)$,
		$H_{i_{x}}(r(e,y)) = r(xy,x)$,
		 $H_{i_{a}}(r(e,xy)) = r(y,x)$, $H_{i_{b}}(r(e,xy)) = r(xy,e)$ and $H_{i_{y}}(r(e,xy)) = r(x,y)$. Also,  $R(r(e,y)) = r(y,x)$. As a result, $|r(e,y)| = |r(x,xy)| = |r(y,e)| = |r(xy,x)| = |r(e,xy)| = |r(y,x)| = |r(xy,e)| = |r(x,y)|$.

		Let $\theta \in class~130.$ By Remark \ref{re3}, $\vert \Delta_{33} \vert = 3$. Thus, $\vert \Delta_{33} \cap \theta(\Delta_{33}) \vert \geq 2$. Clearly, $\phi_{\theta}(\Delta_{33}) = H^{\ast}.$ Assume $\eta \in \Delta_{33}$ such that $\phi_{\theta}(\eta) = h_{1}, \phi_{\theta}(\theta(\eta)) = h_{2}$ and $\phi_{\theta}(\theta^{2}(\eta))=h_{1}h_{2}.$ Then, $\theta(\eta) = \eta h_{1}, \theta^{2}(\eta) = \eta h_{1}h_{2}$ and $\theta^{3}(\eta) = \eta$. Therefore, we get $ \Delta_{33} = \theta(\Delta_{33})$. Since $\Delta_{32} \sqcup \Delta_{33} = d_{3}H = \theta(\Delta_{23}) \sqcup \theta(\Delta_{33})$,  $\Delta_{32} = \theta(\Delta_{23})$. By Remark \ref{re2}, $\theta^{-1} \in class~130.$ Thus, $|\Delta_{12} \cap \theta^{-1}(\Delta_{21})| = 3$. Consider $|\Delta_{21} \cap \theta(\Delta_{12})| = |\theta^{-1}(\Delta_{21}) \cap \theta(\Delta_{12}))|= |\theta^{-1}(\Delta_{21}) \cap \Delta_{12}| = 3$. Since $\Delta_{21}\sqcup \Delta_{23} = d_{2}H = \theta(\Delta_{12}) \sqcup \theta(\Delta_{32})$,   $\Delta_{23} = \theta(\Delta_{32}).$
		
		   For $\theta \in r(e,y)$, consider Table \ref{t2}.
		\begin{table}[H]
		\begin{center}
				\begin{tabular}{|c| c| c| c|c|}
				\hline
			$z$ & $e$ & $x$ & $y$ & $xy$ \\ \hline
			$\theta(z)$ & $e$ & $a$ & $ay$ & $ax$\\ \hline
			$\phi_{\theta}(z)$ & $e$ & $ay$ & $ax$ & $a$ \\ \hline
			\end{tabular}\caption{} \label{t2}
		\end{center}
		\end{table}
		\vspace{-.7cm}
		By Table \ref{t2}, $\theta(\Delta_{12}) = \{a,ay,ax\}.$ By Remark \ref{re3}, $\theta(\Delta_{32}) = \{axy\}$. Thus, there exists  $\zeta \in \Delta_{32}$ such that $\theta(\zeta) = axy$. Since $\Delta_{23} = \theta(\Delta_{32})$ and $\Delta_{32} = \theta(\Delta_{23})$, $\theta(axy) = \zeta$. Clearly, $\phi_{\theta}(axy) \in d_{2}H \setminus \{ay,ax,a\}$. So, $\phi_{\theta}(axy) = axy$. Therefore, $\zeta = by                                                                                                                                                                                                                                                                                                                                                                                                                                                                                                                                                                                                                                                                                                                                                                                                                                                                                                                                                                                                                                                                                                                                                                                                                       $, $\Delta_{33} = \{b,bx,bxy\} = \theta(\Delta_{33})$ and $\Delta_{21} =\{a,ay,ax\}$. As a result, we get following orthomorphisms
		\begin{enumerate}
			\item [(i)] $(x,a)(y,ay)(xy,ax)(axy,by)(b,bx,bxy),$ 
			\item [(ii)] $(x,a)(y,ay)(xy,ax)(axy,by)(b,bxy,bx),$ 
			\item [(iii)]$(x,a,xy,ax,y,ay)(axy,by)(b,bx,bxy),$ 
			\item [(iv)]$(x,a,xy,ax,y,ay)(axy,by)(b,bxy,bx).$ 
		\end{enumerate}
		Hence, $|r(e,y)| = 4$ and $|130| = 32$.
	\end{proof}
	\begin{lemma}\label{le8} Let $u(h_{1},h_{2}) = \{\theta \in Orth(G): \theta(h_{1}) \in aH, \theta(h_{2}) \in aH, \theta(h_{1}h_{2}) \in bH\}$, where $h_{i} \in H^{\ast}$. Then,
		\begin{enumerate}
			\item [(i)] $H_{i_{a}}(u(h_{1},h_{2})) = u(\sigma^{-1}(h_{1}), \sigma^{-1}(h_{2}))$ and
			\item[(ii)] $H_{i_{b}}(u(h_{1},h_{2})) = u(\sigma(h_{1}), \sigma(h_{2}))$.
		\end{enumerate}
	\end{lemma}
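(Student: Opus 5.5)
The plan is to compute the homology $H_{i_a}$ directly on each defining condition of $u(h_{1},h_{2})$, using Remark \ref{re4}(a) to identify $i_a$ and $i_b$ on $H$ with $\sigma^{-1}$ and $\sigma$.

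First I pin down how $i_a$ and $i_b$ act on $H$. Remark \ref{re4}(a) gives $ha = a\sigma(h)$ for $h\in H^{\ast}$. Hence $a^{-1}ha=\sigma(h)$, that is $i_b(h)=\sigma(h)$, and since $i_a=i_b^{-1}$ we get $i_a(h)=\sigma^{-1}(h)$. Both maps fix $e$, and $\sigma$ is a bijection of $H$, so $i_a|_H=\sigma^{-1}$ and $i_b|_H=\sigma$ on all of $H$. Since $H$ is normal and $G/H\cong\mathbb{Z}_3$ is abelian, every inner automorphism preserves each coset of $H$. In particular $i_a(aH)=aH$ and $i_a(bH)=bH$, which is the coset-preserving situation of Lemma \ref{le5}.

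Next, take $\theta\in u(h_{1},h_{2})$ and set $\theta'=H_{i_a}(\theta)=i_a\theta i_a^{-1}$; this is again an orthomorphism. For $h\in H$ we have $\theta'(i_a(h))=i_a(\theta(h))$. Apply this in turn to $h_1$, $h_2$ and $h_1h_2$:
\begin{itemize}
\item $\theta'(\sigma^{-1}(h_1))=i_a(\theta(h_1))\in i_a(aH)=aH$;
\item similarly $\theta'(\sigma^{-1}(h_2))\in aH$;
\item because $i_a$ is a homomorphism, $\sigma^{-1}(h_1)\sigma^{-1}(h_2)=i_a(h_1h_2)$, and $\theta'(i_a(h_1h_2))=i_a(\theta(h_1h_2))\in i_a(bH)=bH$.
\end{itemize}
Thus $H_{i_a}(u(h_1,h_2))\subseteq u(\sigma^{-1}(h_1),\sigma^{-1}(h_2))$.

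For the reverse inclusion, note that $H_{i_b}=H_{i_a}^{-1}$. The same computation with $i_b$, which acts as $\sigma$ on $H$, gives
\[
H_{i_b}\bigl(u(k_1,k_2)\bigr)\subseteq u(\sigma(k_1),\sigma(k_2)).
\]
Taking $k_j=\sigma^{-1}(h_j)$ yields $H_{i_b}(u(\sigma^{-1}(h_1),\sigma^{-1}(h_2)))\subseteq u(h_1,h_2)$. Applying $H_{i_a}$ to both sides gives the reverse inclusion, so (i) holds with equality.

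Part (ii) is this same symmetric argument, with the roles of $i_a$ and $i_b$ interchanged. The only step that needs care is the first one: getting the direction of $\sigma$ versus $\sigma^{-1}$ right from Remark \ref{re4}(a), and checking that it extends multiplicatively to the product $h_1h_2$. The rest is routine bookkeeping.
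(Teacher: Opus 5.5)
Your proof is correct and is essentially the paper's argument, which just says the result follows from the definition of homology. You spell that computation out: Remark \ref{re4}(a) gives $i_a|_H=\sigma^{-1}$ and $i_b|_H=\sigma$, inner automorphisms fix each coset of $H$, and the reverse inclusion comes from $H_{i_b}=H_{i_a}^{-1}$, exactly as $\psi^{-1}$ is used in Lemma \ref{le3}.
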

	\begin{proof}
		The proof follows from the definition of homology.
	\end{proof}
\begin{lemma}\label{le8}\hfill
	\begin{enumerate}
		\item [(i)]  Suppose $u(x,y) = \{\theta \in Orth(G): \theta(x) \in aH, \theta(y) \in aH, \theta(xy) \in bH\}.$ Then, $\bm{\vert 121 \vert = 3\vert u(x,y) \vert}$.
		
		\item [(ii)] Suppose $u(x,y,h_{1},h_{2},h_{3}) = \{\theta \in Orth(G) : \theta(x) = ah_{1},\theta(y) = ah_{2},$ $ \theta(xy) = bh_{3}\}$. Then,	\[\bm{\vert u(x,y) \vert = 16 \vert u(x,y,e,y,e) \vert + 8 \vert u(x,y,e,y,x) \vert + 8\vert u(x,y,xy,x,e) \vert}.\]
	\end{enumerate}
\end{lemma}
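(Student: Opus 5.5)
The plan for (i) is to write $class~121$ as a disjoint union of sets of the form $u(h_{1},h_{2})$. For (ii) I would split $u(x,y)$ into the $32$ subclasses $u(x,y,h_{1},h_{2},h_{3})$ and show, using the automorphisms of Orth($G$), that they fall into three orbits of sizes $16$, $8$ and $8$.

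\emph{Part (i).} If $\theta\in class~121$, then $\alpha=1$, so by Remark \ref{re1} no element of $H^{\ast}$ is mapped into $H^{\ast}$. Two elements of $H^{\ast}$ are mapped into $aH$ and one into $bH$. If the two elements sent into $aH$ are $h_{1},h_{2}$, the third is $h_{1}h_{2}$. Since $u(h_{1},h_{2})=u(h_{2},h_{1})$, this gives
\[class~121=u(x,y)\sqcup u(y,xy)\sqcup u(xy,x).\]
By the first lemma labelled \ref{le8}, $H_{i_{a}}(u(x,y))=u(\sigma^{-1}(x),\sigma^{-1}(y))=u(xy,x)$ and $H_{i_{b}}(u(x,y))=u(\sigma(x),\sigma(y))=u(y,xy)$. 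Homologies are bijections on Orth($G$), and $i_{a},i_{b}$ preserve cosets, so these three pieces have equal size. Hence $|121|=3|u(x,y)|$.

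\emph{Part (ii), admissible triples.} First I would determine which triples $(h_{1},h_{2},h_{3})$ can occur.
\begin{itemize}
\item Injectivity of $\theta$ forces $h_{1}\neq h_{2}$.
\item By Remark \ref{re4}, $\phi_{\theta}(x)=xah_{1}=a\sigma(x)h_{1}$ and $\phi_{\theta}(y)=a\sigma(y)h_{2}$. Injectivity of $\phi_{\theta}$ then forces $h_{1}h_{2}\neq\sigma(x)\sigma(y)=x$.
\item $h_{3}$ is a priori free.
\end{itemize}
This gives $4\cdot 2\cdot 4=32$ subclasses.

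\emph{Part (ii), symmetry group.} Next I would compute how the coset-preserving homologies $H_{i_{k}}$, $k\in H$, act. Each fixes $x$ and $y$, so it maps $u(x,y)$ to itself. Using Remark \ref{re4}(a), I expect
\[H_{i_{k}}(u(x,y,h_{1},h_{2},h_{3}))=u(x,y,\sigma(k)k\,h_{1},\ \sigma(k)k\,h_{2},\ \sigma^{-1}(k)k\,h_{3}),\]
which is a free action with orbits of size $4$. I would also bring in the reflection $R$. We have $R[\theta](x)=x\theta(x)=a\sigma(x)h_{1}\in aH$, similarly for $y$, and $R[\theta](xy)=xy\,bh_{3}=b\sigma^{-1}(xy)h_{3}\in bH$. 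So $R$ also preserves $u(x,y)$, acting on parameters by $(h_{1},h_{2},h_{3})\mapsto(\sigma(x)h_{1},\sigma(y)h_{2},\sigma^{-1}(xy)h_{3})$. Possibly I would also use $\theta\mapsto\theta^{-1}$ (Remark \ref{re2}) or a translation $T_{g}$ when it keeps $u(x,y)$ invariant.

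\emph{Part (ii), orbit count.} The group generated by $\{H_{i_{k}}\}$ and $R$ acts on the $32$ admissible triples. I would list the orbits explicitly and aim to show there are exactly three, with representatives $(e,y,e)$, $(e,y,x)$ and $(xy,x,e)$ and sizes $16$, $8$ and $8$. Since automorphisms of Orth($G$) are bijections, subclasses in one orbit have equal cardinality, and the displayed identity follows.

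\emph{Main obstacle.} The hard step is the orbit bookkeeping. Because $R$ and the $H_{i_{k}}$ need not commute, I must check that the $16$-orbit really merges two $H$-orbits via $R$, while the other two $H$-orbits of $(e,y,x)$ and $(xy,x,e)$ are each merged with only one further $H$-orbit. If the plain generated group gives a different partition, I would adjoin $\theta\mapsto\theta^{-1}$ or a suitable translation $T_{h}$ restricted to $u(x,y)$. Alternatively, I would prove directly that some $H$-orbits consist of empty subclasses, in the style of Theorem \ref{th2}, and absorb them into the count.
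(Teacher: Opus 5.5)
Part (i) and your count of the $32$ admissible triples are correct and agree with the paper. The gap is in the orbit step of (ii): the group you propose cannot produce an orbit of size $16$.

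\textbf{Why your group is too small.} Every homology commutes with $R$, since $(H_{\psi}(R[\theta]))(z)=z\,\psi\theta\psi^{-1}(z^{-1})=R[H_{\psi}\theta](z)$. As you computed, $H_{i_x},H_{i_y},H_{i_{xy}},R$ act on triples as multiplication in $H^{3}$ by $(xy,xy,y)$, $(x,x,xy)$, $(y,y,x)$, $(y,xy,y)$. These generate a subgroup of $H^{3}$ of order $8$ acting freely. So $\langle H_{i_k}\,(k\in H),R\rangle$ has exactly four orbits on the $32$ triples, each of size $8$. Their representatives are $(e,y,e)$, $(e,xy,e)$, $(e,y,x)$ and $(xy,x,e)$: these are the paper's Orbits 2 and 3, plus its Orbit 1 split into two halves. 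All you can conclude is
\[|u(x,y)|=8\bigl(|u(x,y,e,y,e)|+|u(x,y,e,xy,e)|+|u(x,y,e,y,x)|+|u(x,y,xy,x,e)|\bigr).\]
The stated identity needs the extra equality $|u(x,y,e,y,e)|=|u(x,y,e,xy,e)|$, and nothing in your plan provides it.

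\textbf{Why the fallbacks fail.}
\begin{enumerate}
\item[(a)] Inversion need not map $u(x,y)$ into itself. Moreover $\theta^{-1}(x),\theta^{-1}(y),\theta^{-1}(xy)$ are not determined by $(h_1,h_2,h_3)$, so inversion does not permute subclasses.
\item[(b)] No single $T_h$ with $h\in H^{\ast}$ preserves $u(x,y)$. By the computations in the proof of Lemma \ref{le4}, $T_h$ sends class $121$ to class $211$ when $\theta(h)\in aH$, and to class $112$ when $\theta(h)\in bH$.
\item[(c)] $T_g$ with $g\notin H$ depends on values of $\theta$ outside $H$, so it also does not act on the subclasses.
\item[(d)] You cannot discard the orbit of $(e,xy,e)$ as empty. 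The map below carries $u(x,y,e,y,e)$, which has $10$ elements by Theorem \ref{th4}, onto one of its subclasses.
\end{enumerate}

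\textbf{The missing idea.} The paper composes the translation $T_{xy}$ with a coset-swapping homology. Take $\psi=i_{(1~4)}$, which swaps $aH$ with $bH$, swaps $x$ with $y$, and fixes $xy$. Then $H_{\psi}$ sends $u(x,y)$ into class $112$, with $x,y\mapsto bH$ and $xy\mapsto aH$, and $T_{xy}$ brings it back into $u(x,y)$. The composite $RT_{xy}H_{i_{(1~4)}}$ acts on triples by
\[(h_1,h_2,h_3)\mapsto\bigl(\sigma^{-1}\psi(h_1h_3),\,x\,\sigma^{-1}\psi(h_2h_3),\,xy\,\sigma^{-1}\psi(h_3)\bigr).\]
It sends $(e,y,e)$ to $(e,y,xy)$, which lies in the orbit of $(e,xy,e)$, and it stabilizes the other two orbits. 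Adjoining it to your group gives the paper's group $G_1$ of order $16$ and the orbit sizes $16,8,8$.
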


\begin{proof}\begin{enumerate}
		\item [(i)] 	Clearly $class~121 = u(x,y) \sqcup u(x,xy) \sqcup u(y,xy)$, where
		$u(x,y) = \{\theta \in \text{Orth}(G): \theta(x) \in aH, \theta(y) \in aH, \theta(xy) \in bH\}$. By Lemma \ref{le8}, $H_{i_{a}}(u(x,y)) = u(x,xy)$ and  $H_{i_{b}}(u(x,y)) = u(y,xy).$ This implies $\vert u(x,y) \vert = \vert u(x,xy) \vert = \vert u(y,xy) \vert$. Therefore, 	$\vert 121 \vert = 3\vert u(x,y) \vert$.
		\item [(ii)] 	Now, $H_{i_{x}}(u(x,y)) \subseteq u(x,y)$, $R(u(x,y)) \subseteq u(x,y)$ and $RT_{xy}H_{i_{(1~4)}}$ $(u(x,y))$ $  \subseteq u(x,y)$. Clearly, the group $G_{1}$ generated by $ H_{i_{x}}, R, RT_{xy}H_{i_{(1~4)}}$ will fix $u(x,y)$ and $G_{1}$ $ \cong C_{2} \times D_{4}$. Consider $u(x,y,h_{1},h_{2},h_{3}) = \{\theta \in \text{Orth}(G) : \theta(x) = ah_{1}, \theta(y) = ah_{2}, \theta(xy) = bh_{3}\},$ where, $h_{i} \in H$. Clearly, $h_{1} \neq h_{2}$ and $\phi_{\theta}(x) \neq \phi_{\theta}(y)$, which gives $h_{1}h_{2} \neq x$. Thus, we have $4$ choices for $h_{1}$, $2$ choices for $h_{2}$ and $4$ choices for $h_{3}$. Observe that there are $32$ sub classes of $u(x,y)$ and the subgroup $G_{1}$ of the group of automorphisms of $\text{Orth}(G)$ will act naturally on $u(x,y)$. Following Table \ref{t3} shows the action of elements of $G_{1}$ on $u(x,y,h_{1},h_{2},h_{3})$.
	\end{enumerate}
		\begin{table}[H]
			\begin{tabular}{|c|c|}
					\hline
					$g \in G'$ & $g(u(x,y,h_{1},h_{2},h_{3}))$\\ \hline
					$H_{i_{x}}$ & $u(x,y,xyh_{1},xyh_{2},yh_{3})$\\ \hline
					$H_{i_{y}}$ & $u(x,y,xh_{1},xh_{2},xyh_{3})$\\ \hline
					$H_{i_{xy}}$ & $u(x,y,yh_{1},yh_{2},xh_{3})$ \\ \hline 
					$R$ & $u(x,y,yh_{1},xyh_{2},yh_{3})$ \\ \hline
					$RH_{i_{x}}$ & $u(x,y,xh_{1},h_{2},h_{3})$ \\ \hline
					$RH_{i_{y}}$ & $u(x,y,xyh_{1},yh_{2},xh_{3})$ \\ \hline
					$RH_{i_{xy}}$ & $u(x,y,h_{1},xh_{2},xyh_{3})$ \\ \hline
					$T_{xy}H_{i_{(2~3)}}$ & $u(x,y, \sigma^{-1}(i_{(2~3)}(h_{1}h_{3})),\sigma^{-1}(i_{(2~3)}(h_{2}h_{3})), \sigma^{-1}(i_{(2~3)}(h_{3})))$ \\ \hline
					$T_{xy}H_{i_{(1~4)}}$ & $u(x,y, \sigma^{-1}(xy(i_{(1~4)}(h_{1}h_{3}))),\sigma^{-1}(xy(i_{(1~4)}(h_{2}h_{3}))), \sigma^{-1}(y(i_{(1~4)}(h_{3}))))$ \\ \hline
					$T_{xy}H_{i_{(1~2~4~3)}}$ & $u(x,y, \sigma^{-1}(x(i_{(1~2~4~3)}(h_{1}h_{3}))),\sigma^{-1}(x(i_{(1~2~4~3)}(h_{2}h_{3}))), \sigma^{-1}(xy(i_{(1~2~4~3)}(h_{3}))))$ \\ \hline
					$T_{xy}H_{i_{(1~3~4~2)}}$ & $u(x,y, \sigma^{-1}(y(i_{(1~3~4~2)}(h_{1}h_{3}))),\sigma^{-1}(y(i_{(1~3~4~2)}(h_{2}h_{3}))), \sigma^{-1}(x(i_{(1~3~4~2)}(h_{3}))))$ \\ \hline
					$RT_{xy}H_{i_{(2~3)}}$ & $u(x,y, y(\sigma^{-1}(i_{(2~3)}(h_{1}h_{3}))),xy(\sigma^{-1}(i_{(2~3)}(h_{2}h_{3}))), y(\sigma^{-1}(i_{(2~3)}(h_{3}))))$ \\ \hline
					$RT_{xy}H_{i_{(1~4)}}$ & $u(x,y, \sigma^{-1}(i_{(1~4)}(h_{1}h_{3})),x(\sigma^{-1}(i_{(1~4)}(h_{2}h_{3}))), xy(\sigma^{-1}(i_{(1~4)}(h_{3}))))$ \\ \hline
					$RT_{xy}H_{i_{(1~2~4~3)}}$ & $u(x,y, x(\sigma^{-1}(i_{(1~2~4~3)}(h_{1}h_{3}))),\sigma^{-1}(i_{(1~2~4~3)}(h_{2}h_{3})), \sigma^{-1}(i_{(1~2~4~3)}(h_{3})))$ \\ \hline
					$RT_{xy}H_{i_{(1~3~4~2)}}$ & $u(x,y, xy(\sigma^{-1}(i_{(1~3~4~2)}(h_{1}h_{3}))),y(\sigma^{-1}(i_{(1~3~4~2)}(h_{2}h_{3}))), x(\sigma^{-1}(i_{(1~3~4~2)}(h_{3}))))$ \\ \hline		
				\end{tabular}\caption{} \label{t3}
			\end{table}
		\vspace{-.3cm}
		From above table, by taking values of $h_{i}$, we have following $3$ orbits, one of length $16$ and two of length $8$.
		\begin{align*}
		 			  \text{ Orbit 1}  =\{&u(x,y,e,y,e), u(x,y,xy,x,y),u(x,y,x,xy,xy),u(x,y,y,e,x),\\&u(x,y,y,x,y),u(x,y,x,y,e),u(x,y,xy,e,x),u(x,y,e,xy,xy),\\&u(x,y,e,xy,e),u(x,y,y,x,x),u(x,y,xy,e,y),u(x,y,x,y,xy),\\&u(x,y,y,e,y),u(x,y,e,y,xy),u(x,y,x,xy,e),u(x,y,xy,x,x).\}\\
		 			  \text{	Orbit 2}  =	\{&u(x,y,e,y,x), u(x,y,xy,x,xy),u(x,y,x,xy,y),u(x,y,y,e,e),\\&u(x,y,y,x,xy),u(x,y,x,y,x),u(x,y,xy,e,e),u(x,y,e,xy,y).\}\\
		 			  \text{Orbit 3} = \{&u(x,y,e,y,y), u(x,y,xy,x,e),u(x,y,x,xy,x),u(x,y,y,e,xy),\\&u(x,y,y,x,e),u(x,y,x,y,y),u(x,y,xy,e,xy),u(x,y,e,xy,x).\} 
				\end{align*}

		 Therefore, we have
		\[\vert u(x,y) \vert = 16 \vert u(x,y,e,y,e) \vert + 8 \vert u(x,y,e,y,x) \vert + 8\vert u(x,y,xy,x,e) \vert.\]
	\end{proof}
\begin{theorem}\label{th4}
     The number of orthomorphisms in $u(x,y,e,y,e)$ is $10.$
\end{theorem}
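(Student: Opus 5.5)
The plan is to compute $u(x,y,e,y,e)$ directly, by fixing $\theta$ on $H$ and propagating the constraints of Lemma \ref{le1} through the remaining eight cells of $aH \sqcup bH$. Here $\theta(x)=a$, $\theta(y)=ay$ and $\theta(xy)=b$. By Remark \ref{re4}(a), $ha=a\sigma(h)$ and $hb=b\sigma^{-1}(h)$, which gives
$$\phi_\theta(x)=ay,\qquad \phi_\theta(y)=ax,\qquad \phi_\theta(xy)=by.$$
So the diagonals $ay$, $ax$, $by$ and $e$ are already used. Since $\theta\in class~121$, Remark \ref{re3} fixes the block counts. Exactly two elements of $aH$ go to $H^{\ast}$, one goes into $aH$ and one into $bH$. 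Exactly one element of $bH$ goes to $H^{\ast}$, one goes into $aH$ and two go into $bH$. The targets that are still free are $\{x,y,xy\}$, $\{ax,axy\}$ and $\{bx,by,bxy\}$.

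I would first try to cut down the work with symmetry. Orbit 1 in Lemma \ref{le8}(ii) has length $16=|G_1|$, so the stabiliser of this subclass in $G_1$ is trivial and $G_1$ gives no further reduction. I would still look for an element outside $G_1$, for example one built from $\theta\mapsto\theta^{-1}$ (Remark \ref{re2}) combined with a homology, that maps $u(x,y,e,y,e)$ to itself. If one exists, it pairs up solutions and halves the hand count. If none exists, I would enumerate directly.

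The enumeration proceeds as follows.
\begin{enumerate}
\item Choose the element $\lambda\in aH$ with $\theta(\lambda)\in bH$. By the same cycle-type bookkeeping as in Theorem \ref{th3}, this determines which element of $bH$ goes to $H^{\ast}$ and which goes to $aH$.
\item Choose the element $\mu\in aH$ with $\theta(\mu)\in aH$. Its image must be $ax$ or $axy$, and $\phi_\theta(\mu)=\mu^{-1}\theta(\mu)\in H$ must avoid $e$; this leaves only a few possibilities.
\item Assign the two elements of $aH$ that go to $H^{\ast}$, and the single element of $bH$ that goes to $H^{\ast}$, to the three free targets $x,y,xy$. Their $\phi$-values lie in $bH$ and $aH$ respectively, and they must fill the free diagonals $\{b,bx,bxy\}$ and $\{a,axy\}$. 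Remark \ref{re4}(b) forces $h_2h_3\ne$ (the translate already used), just as in the proof of Theorem \ref{th2}, and this prunes most branches.
\item Finally, the two elements of $bH$ that go into $bH$ have $\phi$-values in $H$. These must be exactly the two elements of $H^{\ast}$ not already used as $\phi$-values in step 2, which leaves at most two completions.
\end{enumerate}

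The main obstacle is step 3. Its constraints couple the three cosets, and forgetting a single $\phi$-collision would change the count. I would organise it as a small table, in the style of Table \ref{t1}, indexed by the choices of $\lambda$ and $\mu$, and check each surviving candidate against the full diagonal condition of Lemma \ref{le1}. I expect exactly $10$ permutations to survive, and I would list them in cycle notation, as was done in Theorem \ref{th3}.
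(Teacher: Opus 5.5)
Your set-up is correct: the block counts from Remark~\ref{re3}, the free targets $\{x,y,xy\}$, $\{ax,axy\}$, $\{bx,by,bxy\}$, and the free diagonals $\{x,y,xy\}$, $\{a,axy\}$, $\{b,bx,bxy\}$ are all right. However, step~1 rests on a false claim. Fixing the element $\lambda\in aH$ with $\theta(\lambda)\in bH$, even together with $\theta(\lambda)$, does not determine which element of $bH$ goes to $H^{\ast}$ and which goes to $aH$.

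The argument of Theorem~\ref{th3} does not transfer. There $|\Delta_{33}|=3$ and $\phi_\theta(\Delta_{33})=H^{\ast}$ forced $\Delta_{33}$ to be a $\theta$-invariant $3$-cycle, whence $\theta(\Delta_{23})=\Delta_{32}$. In class $121$ one only has $|\Delta_{33}|=2$, and there is no invariance.

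Concretely, the three maps $(x,a,axy,xy,b,by,bxy,ax,bx,y,ay)$, $(x,a)(y,ay)(xy,b,bxy,by)(ax,bx,axy)$ and $(x,a)(y,ay,ax,bx,bxy,by,xy,b,axy)$ all lie in $u(x,y,e,y,e)$, and all have $\theta(ax)=bx$. Yet the pairs of elements of $bH$ sent into $H^{\ast}$ and into $aH$ are $(bx,bxy)$, $(by,bx)$ and $(by,b)$ respectively. In the third map, $\Delta_{33}=\{bx,bxy\}$ but $\theta(bxy)=by$. Any pruning based on step~1 therefore discards genuine solutions and undercounts.

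What does tie $\lambda$ to $bH$ is the diagonal count. If $\beta_0\in bH$ is the element sent into $H^{\ast}$, then $\{\phi_\theta(\lambda),\phi_\theta(\beta_0)\}=\{a,axy\}$. This leaves three a priori choices of $(\beta_0,\theta(\beta_0))$ for each $(\lambda,\theta(\lambda))$, not one.

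Similarly, let $\beta_1\in bH$ be the element sent into $aH$. Then $\theta(\beta_1)$ is whichever of $ax,axy$ is not $\theta(\mu)$, and $\phi_\theta(\beta_1)$ shares the diagonals $\{b,bx,bxy\}$ with the two elements of $aH$ sent into $H^{\ast}$. So step~3 as written, where the three elements going into $H^{\ast}$ ``fill'' $\{b,bx,bxy\}$ and $\{a,axy\}$, misallocates these diagonals.

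You have to branch over $\beta_0$ and $\beta_1$ as well, and then actually perform the check. ``I expect exactly $10$ permutations to survive'' is not an argument; for a statement like this, the exhaustive case analysis with the survivors listed is the entire proof.

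With step~1 replaced by full branching, your ordering does go through: $\lambda$, then $\mu$, then $\beta_0,\beta_1$, then the at most two completions inside $bH$. It yields exactly the ten maps the paper lists. It is the same hand enumeration as the paper's, which instead branches on $\theta(bh_3)\in\{x,y,xy\}$, then on $h_3$ and $(h_1,h_2)$, and finishes with a reduced matrix.
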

\begin{proof}
	 Consider $u(x,y,e,y,e) = \{ \theta \in Orth(G) : \theta(x) = a,\theta(y) = ay, \theta(xy) = b\}$. Clearly $\phi_{\theta}(x) =ay$, $\phi_{\theta}(y) = ax$ and $\phi_{\theta}(xy) = by$. Since $\theta \in class~121$, there exist two elements in $aH$ which map to  $H^{\ast}$ and one element in $bH$ which maps to $H^{\ast}$.  Take $h_{1} \neq h_{2}, h_{3} \in H$.  Now, there are three choices for the image of elements of $bH$ which maps to $H^{\ast}$. Based on these $3$ choices there are following three cases:   
	
	\textbf{Case 1:}  $\theta(bh_{3}) = x$. Then $\theta(ah_{1}) = y$ and $\theta(ah_{2}) = xy$. Since $\phi_{\theta}(bh_{3}) \in aH \setminus \{ay,ax\}$, $h_{3} \neq e$ and $h_{3} \neq y$. Also, $\phi_{\theta}(ah_{1}), \phi_{\theta}(ah_{2}) \in bH \setminus \{by\}$ and $\phi_{\theta}(ah_{1}) \neq \phi_{\theta}(ah_{2})$. This results in $h_{1} \neq e, h_{2} \neq y$ and $h_{1}h_{2} \neq y$. For $h_{3} = x$, $h_{1} = xy$ and $h_{2} = e$, Table \ref{t21} gives value of $\phi_{\theta}.$ Now, by deleting rows and columns which contains $1$ in $M_{\theta}$, we get reduced matrix given by Table \ref{t20}, where the entries in the cells $ij$ are $g_{j}^{-1}g_{i}$. 
	\begin{table}[H]
		\begin{center}
			\begin{tabular}{|c|c|c|c|c|c|c|}
				\hline
				$z$&$x$&$y$&$xy$&$axy$&$a$&$bx$ \\ \hline
			$\theta(z)$&$a$&$ay$&$b$&$y$&$xy$&$x$ \\ \hline
			$\phi_{\theta}(z)$&$ay$&$ax$&$by$&$b$&$bxy$&$axy$ \\ \hline
			\end{tabular}\caption{} \label{t21}
		\end{center}
	\end{table}
	\vspace{-.5cm}
		\begin{table}[H]
		\begin{center}
		\begin{tabular}{|c|c|c||c|c|c|}
			\hline
			 $\bm{\theta}$&$\bm{ax}$&$\bm{ay}$&$\bm{b}$&$\bm{by}$&$\bm{bxy}$\\ \hline
			 $\bm{ax}$&$e$&$xy$&\cellcolor{gray!25}$bx$&$b$&$bxy$\\ \hline
			 $\bm{axy}$&\cellcolor{gray!25}$y$&$x$&$bxy$&$by$&$bx$ \\ \hline \hline
			 $\bm{bx}$&$axy$&$ay$&$x$&\cellcolor{gray!25}$xy$&$y$ \\ \hline
			 $\bm{by}$&$a$&$ax$&$y$&$e$&\cellcolor{gray!25}$x$ \\ \hline
			 $\bm{bxy}$&$ax$&\cellcolor{gray!25}$a$&$xy$&$x$&$e$ \\ \hline 
		\end{tabular}\caption{Reduced Matrix with cell entries showing respective $\phi_{\theta}$} \label{t20}
		\end{center}
	\end{table}\vspace{-.7cm}

	By choosing suitable value(the chosen cell is in grey) from Table \ref{t20}, we get one orthomorphism given by $\theta_{1} = (x,a,xy,b,$ $ax,axy,y,ay,bxy,by,bx)$.	For the other values of $h_{1},h_{2}$, there is no orthomorphisms.\\
	  For $h_{3} = xy$, following the same process as above there is one orthomorphism as given in Table \ref{t4}.
	\begin{table}[H]
	\begin{center}
			\begin{tabular}{|c|c|}
			\hline
			$(h_{1},h_{2})$ & $\theta$\\ \hline
			$(y,xy)$ &  $\theta_{2} = (x,a,bxy)(y,ay)(xy,b,bx,$ $by,ax,axy)$ \\ \hline
		\end{tabular}\caption{} \label{t4}
	\end{center}
	\end{table}
	\vspace{-.7cm}
	There exists no orthomorphisms for the other values of $h_{1} \text{ and } h_{2}$.\\
	\textbf{Case 2:}  $\theta(bh_{3}) = y.$ Then $\theta(ah_{1}) = x$ and $\theta(ah_{2}) = xy$. Since $\phi_{\theta}(bh_{3}) \in aH \setminus \{ay,ax\}$, $h_{3} \neq e$ and $h_{3} \neq y$. Also, $\phi_{\theta}(ah_{1}), \phi_{\theta}(ah_{2}) \in bH\setminus \{by\}.$ and $\phi_{\theta}(ah_{1}) \neq \phi_{\theta}(ah_{2})$. This results in $h_{1} \neq x, h_{2} \neq y$ and $h_{1}h_{2} \neq xy$. For $h_{3} = x$, following the same process as in Case 1, there are two orthomorphisms as given in Table \ref{t5}.
	\begin{table}[H]
	\begin{center}
			\begin{tabular}{|c|c|}
			\hline
			$(h_{1},h_{2})$ & $\theta$\\ \hline
			$(e,x)$ & $ \theta_{3}=(x,a)(y,ay,ax,xy,b,axy,by,bxy,bx)$ \\ \hline
			$(y,xy)$ & $\theta_{4}=(x,a,axy,xy,b,by,bxy,ax,bx,y,ay)$ \\ \hline
		\end{tabular}\caption{} \label{t5}
	\end{center}
	\end{table}
	\vspace{-.7cm}
	There exists no orthomorphisms for the other values of $h_{1} \text{ and } h_{2}$.\\
	Similarly, for $h_{3} = xy$, there are two orthomorphisms as given in Table \ref{t6}. 
	\begin{table}[H]
		\begin{center}
			\begin{tabular}{|c|c|}
			\hline
			$(h_{1},h_{2})$ & $\theta$\\ \hline
			$(y,e)$ & $ \theta_{5}=(x,a,xy,b,ax,axy,bx,by,bxy,y,ay)$ \\ \hline
			$(y,xy)$ & $\theta_{6}=(x,a,ax,by,bx,bxy,y,ay)(xy,b,axy)$ \\ \hline
		\end{tabular}\caption{} \label{t6}
		\end{center}
	\end{table}
	\vspace{-.7cm}
		There exists no orthomorphisms for the other values of $h_{1} \text{ and } h_{2}$.\\
	\textbf{Case 3:}  $\theta(bh_{3}) = xy.$ Then $\theta(ah_{1}) = x$ and $\theta(ah_{2}) = y$. Since $\phi_{\theta}(bh_{3}) \in aH \setminus \{ay,ax\}$, $h_{3} \neq x$ and $h_{3} \neq xy$. Also, $\phi_{\theta}(ah_{1}), \phi_{\theta}(ah_{2}) \in bH\setminus \{by\}$ and $\phi_{\theta}(ah_{1}) \neq \phi_{\theta}(ah_{2})$. This results in $h_{1} \neq x, h_{2} \neq e$ and $h_{1}h_{2} \neq x$. For $h_{3} = e$, there is an orthomorphism as given in Table \ref{t7}.
	\begin{table}[H]
		\begin{center}
			\begin{tabular}{|c|c|}
			\hline
			$(h_{1},h_{2})$ & $\theta$\\ \hline
			$(e,xy)$ &  $\theta_{7} = (x,a)(y,ay,axy)(xy,b)(ax,by,bx,bxy)$ \\ \hline
		\end{tabular}\caption{} \label{t7}
		\end{center}
	\end{table}
	\vspace{-.7cm}
	There exists no orthomorphisms for the other values of $h_{1} \text{ and } h_{2}$.\\
	For $h_{3} = y$, there are three orthomorphisms as given in Table \ref{t8}. 
	\begin{table}[H]
		\begin{center}
			\begin{tabular}{|c|c|}
			\hline
			$(h_{1},h_{2})$ & $\theta$\\ \hline
			$(e,y)$ &  $\theta_{8} = (x,a)(y,ay)(xy,b,bxy,by)(ax,bx,axy)$ \\ \hline
			$(e,xy)$ & $\theta_{9} = (x,a)(y,ay,ax,bx,bxy,by,xy,b,axy)$\\ \hline
			$(y,x)$ & $\theta_{10} = (x,a,axy,by,xy,b,bx,bxy,ax,y,ay)$ \\ \hline
		\end{tabular}\caption{}\label{t8}
		\end{center}
	\end{table}
	\vspace{-.7cm}
	There exists no orthomorphisms for the other values of $h_{1} \text{ and } h_{2}$.\\
	Therefore, $\bm{\vert u(x,y,e,y,e) \vert = 10}$.
\end{proof}
\begin{theorem}\label{th5}
    The number of orthomorphisms in $u(x,y,e,y,x)$ is $10.$
\end{theorem}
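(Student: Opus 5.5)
We would follow exactly the scheme used for Theorem \ref{th4}. Here $u(x,y,e,y,x)=\{\theta\in \text{Orth}(G):\theta(x)=a,\ \theta(y)=ay,\ \theta(xy)=bx\}$. First we record the forced values of the complete mapping: $\phi_\theta(x)=xa=a\sigma(x)$, $\phi_\theta(y)=y\,ay$, $\phi_\theta(xy)=xy\,bx$. Each of these is simplified using Remark \ref{re4}(a), and we note the three occupied elements of $aH$ and of $bH$ in the image of $\phi_\theta$.

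Since $\theta\in class~121$, two elements of $aH$ and one element of $bH$ are mapped into $H^{\ast}$. We split into three cases according to which element of $H^{\ast}$ is $\theta(bh_3)$, namely $x$, $y$ or $xy$. In each case the remaining two elements of $H^{\ast}$ are the images $\theta(ah_1),\theta(ah_2)$. We then write down the necessary constraints.
\begin{itemize}
\item[(a)] $\phi_\theta(bh_3)$ must lie in $aH$ minus the two already used values. This excludes two values of $h_3$.
\item[(b)] $\phi_\theta(ah_1)$ and $\phi_\theta(ah_2)$ must be distinct and must avoid the one used value $\phi_\theta(xy)\in bH$. This excludes $h_1$ or $h_2$ values and gives a condition on $h_1h_2$.
\end{itemize}
Together these leave, in each case, two admissible $h_3$ and a handful of pairs $(h_1,h_2)$.

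For each admissible triple $(h_1,h_2,h_3)$ we delete from $M_\theta$ the rows and columns already used. This leaves a $5\times5$ reduced matrix: two rows from $aH$ and three from $bH$, against two columns in $aH$ and three in $bH$, with entries $g_j^{-1}g_i$, as in Table \ref{t20}. By Lemma \ref{le1}, completing $\theta$ amounts to choosing a transversal of this reduced matrix that avoids the $\phi_\theta$-values already used. The remaining block structure (the $aH$ rows mapping to $bH$ columns except one, and so on) forces most choices. The counts for $\theta(aH)$ and $\theta(bH)$ must still agree with the collapsed matrix of class $121$: one element of $aH$ goes to $aH$, one to $bH$, and two elements of $bH$ go to $bH$. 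Because of this, each reduced matrix admits only a few candidate transversals, and each is checked directly. We list the orthomorphisms found in each subcase as cycle notation, as in Tables \ref{t4}--\ref{t8}, and sum them to obtain $10$.

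The main obstacle is the bookkeeping: getting every product $h\,a\,h'$ and $h\,b\,h'$ right via $\sigma$, and making sure that no admissible $(h_1,h_2)$ is overlooked and that no completion is double counted. To reduce the work, we would first try to find an automorphism in the stabilizer group $G_1$ of Lemma \ref{le8}(ii), or a composition like $RT_{xy}H_{\psi}$, that maps $u(x,y,e,y,x)$ to itself. Such a map would pair up subcases, so that only representatives need explicit enumeration. Failing that, we would do the full case enumeration, which is finite and small, about a dozen reduced $5\times5$ matrices.
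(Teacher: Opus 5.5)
Your plan is the paper's proof. It uses the same three cases according to $\theta(bh_3)\in\{x,y,xy\}$, the same exclusions coming from the already used values $\phi_\theta(x)=ay$, $\phi_\theta(y)=ax$, $\phi_\theta(xy)=bxy$ (two forbidden values of $h_3$, plus one condition each on $h_1$, $h_2$ and $h_1h_2$), and completion on the reduced $5\times 5$ matrix via Lemma~\ref{le1}. All the substance is in actually carrying out that enumeration, which has four admissible pairs $(h_1,h_2)$ in each of the six subcases, so $24$ reduced matrices rather than a dozen; it yields $1+1$, $2+2$ and $1+3$ orthomorphisms in Cases 1, 2 and 3, i.e.\ the ten listed in the paper.
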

\begin{proof}
	  Consider $u(x,y,e,y,x) = \{\theta \in \text{Orth}(G) : \theta(x) = a, \theta(y) = ay, \theta(xy) = bx\}$.  Clearly, $\phi_{\theta}(x) =ay$, $\phi_{\theta}(y) = ax$ and $\phi_{\theta}(xy) = bxy$. Since $\theta \in class~121$, there exist two element in $aH$ which maps to  $H^{\ast}$ and one element in $bH$ which maps to $H^{\ast}$.  Take $h_{1} \neq h_{2}, h_{3} \in H$.  Now, there are three choices for the image of elements of $bH$ which maps to $H^{\ast}$. Based on these $3$ choices there are following three cases:   
	
	\textbf{Case 1:}   $\theta(bh_{3}) = x.$ Then $\theta(ah_{1}) = y$ and $\theta(ah_{2}) = xy$.  Since $\phi_{\theta}(bh_{3}) \in aH \setminus \{ay,ax\}$,  $h_{3} \neq e$ and $h_{3} \neq y$. Also, $\phi_{\theta}(ah_{1}), \phi_{\theta}(ah_{2}) \in bH\setminus\{bxy\}.$ This results in $h_{1} \neq y, h_{2} \neq e$ and $h_{1}h_{2} \neq y$. For $h_{3} = x,$ following the same process as in Theorem \ref{th4} there exists an orthomorphism as given in Table \ref{t9}.
	\begin{table}[H]
		\begin{center}
			\begin{tabular}{|c|c|}
			\hline
			$(h_{1},h_{2})$ & $\theta$\\ \hline
			$(xy,y)$ &  $\theta_{1} = (x,a,ax,by,b,bxy,axy,y,ay,xy,bx)$ \\ \hline
		\end{tabular}\caption{} \label{t9}
		\end{center}
	\end{table}
	\vspace{-.7cm}
		There exists no orthomorphisms for the other values of $h_{1} \text{ and } h_{2}$.\\
	For $h_{3} = xy,$ there exists an orthomorphism as given in Table \ref{t10}.
	\begin{table}[H]
	\begin{center}
			\begin{tabular}{|c|c|}
			\hline
			$(h_{1},h_{2})$ & $\theta$\\ \hline
			$(x,y)$ &  $\theta_{2} = (x,a,ax,y,ay,xy,bx,axy,by,b,bxy)$ \\ \hline
		\end{tabular} \caption{} \label{t10}
	\end{center}
	\end{table}
	\vspace{-.7cm}
	There exists no orthomorphisms for the other values of $h_{1} \text{ and } h_{2}$.\\
	\textbf{Case 2:}   $\theta(bh_{3}) = y.$ Then $\theta(ah_{1}) = x$ and $\theta(ah_{2}) = xy$. Since $\phi_{\theta}(bh_{3}) \in aH \setminus \{ay,ax\}$,  $h_{3} \neq e$ and $h_{3} \neq y$. Also, $\phi_{\theta}(ah_{1}), \phi_{\theta}(ah_{2}) \in bH\setminus\{bxy
	\}.$ This results in $h_{1} \neq xy, h_{2} \neq e$ and $h_{1}h_{2} \neq xy$. For $h_{3} = x$, there are two orthomorphisms as given in Table\ref{t11}.
	\begin{table}[H]
	\begin{center}
			\begin{tabular}{|c|c|}
			\hline
			$(h_{1},h_{2})$ & $\theta$\\ \hline
			$(e,x)$ &  $\theta_{3} = (x,a)(y,ay,b,bxy,by,axy,ax,xy,bx)$ \\ \hline
			$(x,xy)$ & $\theta_{4} = (x,a,bxy,b,by,ax)(y,ay,axy,xy,bx)$\\ \hline
		\end{tabular}\caption{} \label{t11}
	\end{center}
	\end{table}
	\vspace{-.7cm}
			There exists no orthomorphisms for the other values of $h_{1} \text{ and } h_{2}$.\\
	For $h_{3} = xy$, there are two orthomorphisms as given in Table \ref{t12}.
	\begin{table}[H]
	\begin{center}
			\begin{tabular}{|c|c|}
			\hline
			$(h_{1},h_{2})$ & $\theta$\\ \hline
			$(x,xy)$ &  $\theta_{5} = (x,a,b,by,bxy,y,ay,ax)(xy,bx,axy)$ \\ \hline
			$(x,xy)$ & $\theta_{6} = (x,a,axy,xy,bx,b,by,ax)(y,ay,bxy)$\\ \hline
		\end{tabular} \caption{} \label{t12}
	\end{center}
	\end{table}
	\vspace{-.7cm}
			There exists no orthomorphisms for the other values of $h_{1} \text{ and } h_{2}$.\\
	\textbf{Case 3:}  $\theta(bh_{3}) = xy.$ Then $\theta(ah_{1}) = x$ and $\theta(ah_{2}) = y$.  Since $\phi_{\theta}(bh_{3}) \in aH \setminus \{ay,ax\}$, $h_{3} \neq x$ and $h_{3} \neq xy$. Also, $\phi_{\theta}(ah_{1}), \phi_{\theta}(ah_{2}) \in bH\setminus\{bxy\}.$ This results in $h_{1} \neq xy, h_{2} \neq y$ and $h_{1}h_{2} \neq x$. For $h_{3} = e$, there exists an orthomorphism as given in Table \ref{t13}.
	\begin{table}[H]
	\begin{center}
			\begin{tabular}{|c|c|}
			\hline
			$(h_{1},h_{2})$ & $\theta$\\ \hline
			$(x,xy)$ &  $\theta_{7} = (x,a,ax)(y,ay,bxy,axy)(xy,bx,by,b)$ \\ \hline
		\end{tabular} \caption{} \label{t13}
	\end{center}
	\end{table}
	\vspace{-.7cm}
		There exists no orthomorphisms for the other values of $h_{1} \text{ and } h_{2}$.\\
	For $h_{3} = y$, there are three orthomorphism as given in Table \ref{t14}.
	\begin{table}[H]
		\begin{center}
			\begin{tabular}{|c|c|}
			\hline
			$(h_{1},h_{2})$ & $\theta$\\ \hline
			$(e,xy)$ &  $\theta_{8} = (x,a)(y,ay,b,bxy,by,xy,bx,ax,axy)$ \\ \hline
			$(x,xy)$ & $\theta_{9} = (x,a,axy,y,ay,b,ax)(xy,bx,bxy,by)$\\ \hline
			$(x,xy)$ & $\theta_{10} = (x,a,bxy,axy,y,ay,ax)(xy,bx,b,by)$ \\ \hline
		\end{tabular}\caption{} \label{t14}
		\end{center}
	\end{table}
	\vspace{-.7cm}
			There exists no orthomorphisms for the other values of $h_{1} \text{ and } h_{2}$.\\
	Therefore, $\bm{\vert u(x,y,e,y,x) \vert = 10}.$
\end{proof}
\begin{theorem}\label{th6}
 	The number of orthomorphisms in  $u(x,y,xy,x,e)$ is $6.$
\end{theorem}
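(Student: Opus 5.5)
I will follow the same scheme as in the proofs of Theorem \ref{th4} and Theorem \ref{th5}. Here $\theta(x)=axy$, $\theta(y)=ax$, $\theta(xy)=b$. Using Remark \ref{re4} ($ha=a\sigma(h)$, $hb=b\sigma^{-1}(h)$) I first compute the fixed values
\[\phi_\theta(x)=x\,axy=a\sigma(x)xy,\qquad \phi_\theta(y)=y\,ax=a\sigma(y)x,\qquad \phi_\theta(xy)=xy\,b=b\sigma^{-1}(xy).\]
With $\sigma=(x,y,xy)$ this gives $\phi_\theta(x)=ax$, $\phi_\theta(y)=ay$ and $\phi_\theta(xy)=by$. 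Since $\theta\in class~121$, two elements $ah_1,ah_2$ of $aH$ map into $H^{\ast}$, and exactly one element $bh_3$ of $bH$ maps into $H^{\ast}$. I will split into three cases according to $\theta(bh_3)\in\{x,y,xy\}$. In each case the remaining two elements of $H^{\ast}$ are the images of $ah_1$ and $ah_2$.

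In each case I first extract the necessary conditions.
\begin{itemize}
\item[(a)] $\phi_\theta(bh_3)=h_3^{-1}b^{-1}\theta(bh_3)$ lies in $aH$ and must avoid $\{ax,ay\}$. This rules out two values of $h_3$.
\item[(b)] $\phi_\theta(ah_1)$ and $\phi_\theta(ah_2)$ lie in $bH\setminus\{by\}$ and are distinct. This excludes one value each of $h_1$ and $h_2$, and one value of the product $h_1h_2$.
\end{itemize}
For instance, in Case 1 ($\theta(bh_3)=x$, $\theta(ah_1)=y$, $\theta(ah_2)=xy$) I expect to get $h_3\notin\{e,y\}$ or a similar pair, leaving two values of $h_3$. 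In every case this leaves at most $2$ choices for $h_3$ and a small number of admissible ordered pairs $(h_1,h_2)$.

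For each surviving triple $(h_1,h_2,h_3)$ I delete from $M_\theta$ the rows and columns already used. This leaves a $5\times 5$ reduced matrix on the rows $aH\setminus\{ah_1,ah_2\}$ and $bH\setminus\{bh_3\}$ and the columns $aH\setminus\{axy,ax\}$ and $bH\setminus\{b\}$. Each cell is labelled by its $\phi_\theta$-value $g_j^{-1}g_i$, as in Table \ref{t20}. The block structure forced by $CM_\theta$ for class $121$ fixes which blocks may carry a $1$:
\begin{itemize}
\item the two remaining rows of $aH$ go to $bH$;
\item among the three remaining rows of $bH$, one goes to $aH$ and two go to $bH$.
\end{itemize}
I then look for permutation patterns in the allowed blocks whose labels are distinct and avoid the $\phi_\theta$-values already used: $e,ax,ay,by$, plus the values for $ah_1,ah_2,bh_3$. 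By Lemma \ref{le1} these patterns are exactly the orthomorphisms in the subclass. I will record each one in cycle notation, as in the tables above, and sum the counts over the cases to reach $6$.

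The main obstacle is carrying out this exhaustive check correctly for every admissible triple: each reduced matrix must be searched completely, and every candidate must have its diagonal condition verified. To keep it manageable, I will use Remark \ref{re4}(b) to prune early. Two elements of the same coset that differ by $h$ have images and $\phi$-values that differ in a controlled way, as in the proof of Theorem \ref{th2}, and this kills many pairs $(h_1,h_2)$ at once. As a cross-check, I will confirm that $\theta\mapsto\theta^{-1}$ (Remark \ref{re2}) and the elements of $G_1$ stabilising the orbit of $u(x,y,xy,x,e)$ permute the list of found orthomorphisms consistently.
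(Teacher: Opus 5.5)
Your setup matches the paper's proof. You correctly take $\theta(xy)=b$ (the paper's proof misprints it as $bx$), your values $\phi_\theta(x)=ax$, $\phi_\theta(y)=ay$, $\phi_\theta(xy)=by$ are right, and the case split on $\theta(bh_3)$ with exclusions (a) and (b) is exactly what the paper does.

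However, the block pattern you impose on the $5\times 5$ search is wrong. For class $121$, Remark~\ref{re3} gives $|\Delta_{21}|=2$ and $|\Delta_{22}|=|\Delta_{23}|=1$. So of the two remaining elements of $aH$, exactly one goes into $aH$ (onto $a$ or $ay$, with $\phi_\theta$-value in $H^{\ast}$). The other goes into $bH$, with $\phi_\theta$-value the element of $\{a,axy\}$ not taken by $\phi_\theta(bh_3)$. Your version, with both going to $bH$, cannot be realised by any bijection: the two free images $a,ay$ would have only one available preimage, while the three free images $bx,by,bxy$ would receive four arguments. So the search as you specify it returns $0$, not $6$. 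For instance, $(x,axy,bx)(y,ax,xy,b,bxy,by,a,ay)$ lies in this subclass and has $a\mapsto ay$ and $axy\mapsto bx$. Also, with rows indexed by arguments, the $\phi_\theta$-label of cell $(i,j)$ is $g_i^{-1}g_j$, not $g_j^{-1}g_i$.

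More fundamentally, you never carry out the enumeration (``I expect'', ``I will \ldots\ reach $6$''), and for this statement the finite search is the whole proof. The admissible data in the three cases are as follows.
\begin{itemize}
\item $\theta(bh_3)=x$ (with $\theta(ah_1)=y$, $\theta(ah_2)=xy$): $h_3\in\{x,xy\}$ and $(h_1,h_2)\in\{(x,e),(y,x),(y,xy),(xy,e)\}$.
\item $\theta(bh_3)=y$ (with $\theta(ah_1)=x$, $\theta(ah_2)=xy$): $h_3\in\{x,xy\}$ and $(h_1,h_2)\in\{(e,x),(y,e),(y,xy),(xy,x)\}$.
\item $\theta(bh_3)=xy$ (with $\theta(ah_1)=x$, $\theta(ah_2)=y$): $h_3\in\{e,y\}$ and $(h_1,h_2)\in\{(e,y),(e,xy),(y,x),(xy,x)\}$.
\end{itemize}
All $24$ resulting reduced matrices must be searched with the corrected block pattern. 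Exactly one completion exists for each of $(h_3;h_1,h_2)=(x;y,x)$, $(xy;y,xy)$, $(xy;e,x)$, $(xy;xy,x)$, $(e;e,xy)$, $(y;xy,x)$, and none otherwise, giving $6$.

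Finally, your proposed cross-check via $\theta\mapsto\theta^{-1}$ is not valid, because inversion does not preserve $u(x,y,xy,x,e)$. The orthomorphism above has $\theta^{-1}(x)=bx\notin aH$.
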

\begin{proof}
 Consider $ u(x,y,xy,x,e) = \{\theta \in Orth(G) : \theta(x) = axy, \theta(y) = ax, \theta(xy) = bx\}.$  Clearly, $\phi_{\theta}(x) =ax$, $\phi_{\theta}(y) = ay$ and $\phi_{\theta}(xy) = by$. Since $\theta \in class~121$, there exist two element in $aH$ which map to $H^{\ast}$ and one element in $bH$ which maps to $H^{\ast}$.  Take $h_{1} \neq h_{2}, h_{3} \in H$.  Now, there are three choices for the image of element of $bH$ which maps to $H^{\ast}$. Based on these $3$ choices there are following three cases:

\textbf{Case 1:}   $\theta(bh_{3}) = x.$ Then $\theta(ah_{1}) = y$ and $\theta(ah_{2}) = xy$. Since $\phi_{\theta}(bh_{3}) \in aH \setminus \{ay,ax\}$, $h_{3} \neq e$ and $h_{3} \neq y$. Also, $\phi_{\theta}(ah_{1}), \phi_{\theta}(ah_{2}) \in bH\setminus \{by\}.$ This results in $h_{1} \neq e, h_{2} \neq y$ and $h_{1}h_{2} \neq y$. For $h_{3} = x,$ there exists an orthomorphism as given in Table \ref{t15}.
\begin{table}[H]
\begin{center}
		\begin{tabular}{|c|c|}
		\hline
		$(h_{1},h_{2})$ & $\theta$\\ \hline
		$(y,x)$ &  $\theta_{1} = (x,axy,bx)(y,ax,xy,b,bxy,by,a,ay)$ \\ \hline
	\end{tabular}\caption{} \label{t15}
\end{center}
\end{table}
\vspace{-.7cm}
	There exists no orthomorphisms for the other values of $h_{1} \text{ and } h_{2}$.\\
For $h_{3} = xy,$ there exists an orthomorphism as given in Table \ref{t16}.
\begin{table}[H]
	\begin{center}
		\begin{tabular}{|c|c|}
		\hline
		$(h_{1},h_{2})$ & $\theta$\\ \hline
		$(y,xy)$ &  $\theta_{2} = (x,axy,xy,b,a,ay,y,ax,bx,by,bxy)$ \\ \hline
	\end{tabular} \caption{} \label{t16}
	\end{center}
\end{table}
\vspace{-.7cm}
	There exists no orthomorphisms for the other values of $h_{1} \text{ and } h_{2}$.\\
\textbf{Case 2:}   $\theta(bh_{3}) = y.$ Then $\theta(ah_{1}) = x$ and $\theta(ah_{2}) = xy$. Since $\phi_{\theta}(bh_{3}) \in aH \setminus \{ay,ax\}$, $h_{3} \neq e$ and $h_{3} \neq y$. Also, $\phi_{\theta}(ah_{1}), \phi_{\theta}(ah_{2}) \in bH\setminus \{by\}.$ This results in $h_{1} \neq x, h_{2} \neq y$ and $h_{1}h_{2} \neq xy$. For $h_{3} = x$, there exists no orthomorphism. For $h_{3} = xy$, there are two orthomorphisms as given in Table \ref{t17}.
\begin{table}[H]
\begin{center}
		\begin{tabular}{|c|c|}
		\hline
		$(h_{1},h_{2})$ & $\theta$\\ \hline
		$(e,x)$ &  $\theta_{3} = (x,axy,ay,bxy,y,ax,xy,b,by,bx,a)$ \\ \hline
		$(xy,x)$ & $\theta_{4} = (x,axy)(y,ax,xy,b,bx,by,a,ay,bxy)$ \\ \hline
	\end{tabular} \caption{} \label{t17}
\end{center}
\end{table}
\vspace{-.7cm}
	There exists no orthomorphisms for the other values of $h_{1} \text{ and } h_{2}$.\\
\textbf{Case 3:}  $\theta(bh_{3}) = xy.$ Then $\theta(ah_{1}) = x$ and $\theta(ah_{2}) = y$. Since $\phi_{\theta}(bh_{3}) \in aH \setminus \{ay,ax\}$, so we have $h_{3} \neq x$ and $h_{3} \neq xy$. Also, $\phi_{\theta}(ah_{1}), \phi_{\theta}(ah_{2}) \in bH\setminus \{by\}.$ This results in $h_{1} \neq e, h_{2} \neq x$ and $h_{1}h_{2} \neq x$. For $h_{3} = e$, there exists an orthomorphism as given in Table \ref{t18}.
\begin{table}[H]
\begin{center}
		\begin{tabular}{|c|c|}
		\hline
		$(h_{1},h_{2})$ & $\theta$\\ \hline
		$(e,xy)$ &  $\theta_{5} = (x,axy,y,ax,a)(xy,b)(bx,by,ay,bxy)$ \\ \hline
	\end{tabular}\caption{} \label{t18}
\end{center}
\end{table}
\vspace{-.7cm}
	There exists no orthomorphisms for the other values of $h_{1} \text{ and } h_{2}.$\\
For $h_{3} = y$, there exists an orthomorphisms as given in Table \ref{t19}.
\begin{table}[H]
	\begin{center}
		\begin{tabular}{|c|c|}
		\hline
		$(h_{1},h_{2})$ & $\theta$\\ \hline
		$(xy,x)$ &  $\theta_{6} = (x,axy)(y,ax)(xy,b,bx,by)(a,bxy,ay)$ \\ \hline
	\end{tabular} \caption{} \label{t19}
	\end{center}
\end{table}
\vspace{-.7cm}
	There exists no orthomorphisms for the other values of $h_{1} \text{ and } h_{2}$.\\
Therefore, $\bm{\vert u(x,y,xy,x,e) \vert = 6}$.
\end{proof}
\begin{theorem}\label{th7}
	The number of orthomorphisms in $class~121  = 864.$
\end{theorem}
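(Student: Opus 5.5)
Theorem \ref{th7} follows by combining the two parts of the second Lemma \ref{le8} with the three counts just computed. First I would invoke part (ii), which expresses $|u(x,y)|$ in terms of one representative from each of the three orbits of subclasses under the group $G_{1}$. There is one orbit of length $16$ and two of length $8$. Substituting Theorem \ref{th4} ($|u(x,y,e,y,e)|=10$), Theorem \ref{th5} ($|u(x,y,e,y,x)|=10$) and Theorem \ref{th6} ($|u(x,y,xy,x,e)|=6$) gives
\[
|u(x,y)| = 16\cdot 10 + 8\cdot 10 + 8\cdot 6 = 160+80+48 = 288 .
\]
Then part (i) of that lemma, $|121| = 3|u(x,y)|$, yields $|121| = 3\cdot 288 = 864$. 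This uses the decomposition $class~121 = u(x,y)\sqcup u(x,xy)\sqcup u(y,xy)$, whose three pieces are permuted by $H_{i_a}$ and $H_{i_b}$ as in the first Lemma \ref{le8}.

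The arithmetic is trivial, so the real content lies in the ingredients, and I would check two points before writing the final line. First, the $32$ subclasses $u(x,y,h_1,h_2,h_3)$ must be genuinely partitioned by the three listed orbits: $16+8+8=32$, every admissible triple (with $h_1\neq h_2$ and $h_1h_2\neq x$) appears exactly once, and automorphisms of Orth$(G)$ in $G_1$ map subclasses bijectively onto subclasses. Equal orbit members then have equal cardinality. Second, the decomposition of $class~121$ must be disjoint and exhaustive. A $\theta$ in $class~121$ sends exactly two elements of $H^{\ast}$ into $aH$ and one, say $h$, into $bH$. Since $H^{\ast}=\{x,y,xy\}$ and the product of the two other elements equals $h$, $\theta$ lies in exactly one of $u(x,y)$, $u(x,xy)$, $u(y,xy)$.

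The main obstacle, if one distrusts the earlier steps, is the orbit computation in Table \ref{t3}, specifically the correctness of the translation-composed maps $T_{xy}H_{i_{\pi}}$ and their action on the parameters. I would verify at least one nontrivial entry directly, for instance that $RT_{xy}H_{i_{(1~4)}}$ preserves $u(x,y)$. As a consistency check I would use Lemma \ref{le7}(i), which gives $|211| = 2|121| = 1728$. Combined with Lemma \ref{le6}, $|112| = |121| = 864$, and the other classes, this lets one confirm that the total reaches $3840$.
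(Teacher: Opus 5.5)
Your proposal is correct and is exactly the paper's argument: substitute Theorems \ref{th4}, \ref{th5} and \ref{th6} into the orbit decomposition $|u(x,y)| = 16|u(x,y,e,y,e)| + 8|u(x,y,e,y,x)| + 8|u(x,y,xy,x,e)|$ to get $288$, then use $|121| = 3|u(x,y)|$ to get $864$. The extra checks you list, that the three orbits partition the $32$ subclasses and that $u(x,y)$, $u(x,xy)$, $u(y,xy)$ partition $class~121$, are sound, but they belong to the proof of the preceding counting lemma rather than to this theorem.
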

\begin{proof}
 By putting the values from Theorem \ref{th4}, \ref{th5} and \ref{th6} in Lemma \ref{le8}, we get  $$\vert u(x,y) \vert = 16 \times 10 + 8 \times 10 + 8 \times 6 = 288$$ and $$\vert 121 \vert = 3 \times 288 = 864.$$
\end{proof}

 \begin{theorem}\label{th8}
 	$\vert Orth(A_{4}) \vert = 3840$.
 \end{theorem}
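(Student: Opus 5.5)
Since every normalized orthomorphism of $A_{4}$ lies in exactly one of the ten classes $400, 220, 202, 211, 112, 121, 130, 103, 310, 301$, we have
\[
|\text{Orth}(A_{4})| = |400| + |220| + |202| + |211| + |112| + |121| + |130| + |103| + |310| + |301|.
\]
The plan is to express every class size in terms of the four classes already computed: $|400| = 128$ (Theorem \ref{th1}), $|220| = 0$ (Theorem \ref{th2}), $|130| = 32$ (Theorem \ref{th3}) and $|121| = 864$ (Theorem \ref{th7}).

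First, Lemma \ref{le6} gives the remaining classes of the pairs:
\begin{itemize}
\item $|202| = |220| = 0$,
\item $|112| = |121| = 864$,
\item $|103| = |130| = 32$,
\item $|310| = |301|$.
\end{itemize}
Next, Lemma \ref{le7} gives $|211| = 2|121| = 1728$ and $|301| = 3|130| = 96$, hence $|310| = 96$ as well.

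Summing,
\[
128 + 0 + 0 + 1728 + 864 + 864 + 32 + 32 + 96 + 96 = 3840.
\]

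The only delicate point is confirming that Lemma \ref{le7}, derived from Lemma \ref{le4}, is applied with the correct parameters.
\begin{itemize}
\item For $|211| = 2|121|$, take $\alpha = 2$, $\beta = 1$ in $|\alpha\beta1| = \alpha|1\alpha\beta|$; this gives $|211| = 2|121|$.
\item For $|301| = 3|130|$, take $\alpha = 3$, $\beta = 0$; this gives $|301| = 3|130|$.
\end{itemize}
Both match the statements used above. With the parameters checked, the result is simple bookkeeping.
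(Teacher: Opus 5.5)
Your proposal is correct and is the paper's proof: sum over the ten classes, use Lemma \ref{le6} to pair $|\alpha\beta\gamma| = |\alpha\gamma\beta|$ and Lemma \ref{le7} to get $|211| = 2|121|$ and $|301| = 3|130|$, then substitute the values from Theorems \ref{th1}, \ref{th2}, \ref{th3} and \ref{th7}. The only difference is that the paper first collapses the sum to $|400| + 2|220| + 8|130| + 4|121| = 128 + 0 + 256 + 3456$, while you write out all ten class sizes.
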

 \begin{proof}
 	Since  each normalized orthomorphism of Orth($A_{4}$) belongs to one of the classes $400,220,202,211,121,112,130,103,310,301$.Thus, $\vert \text{Orth}(A_{4}) \vert$ = $\vert 400 \vert + \vert 220 \vert+\vert 202 \vert+\vert 130 \vert+\vert 103 \vert+\vert 301 \vert+\vert 310 \vert+\vert 121 \vert+\vert 112 \vert+\vert 211 \vert$. From Lemma \ref{le6} and \ref{le7}, we get $\vert \text{Orth}(A_{4}) \vert = \vert 400 \vert + 2 \vert 220 \vert + 8 \vert 130 \vert + 4 \vert 121 \vert $. From Theorem \ref{th1}, \ref{th2}, \ref{th3} and \ref{th7}, we get $$\vert \text{Orth}(A_{4}) \vert = 128 + 8 \times 32 + 4 \times 864 = 3840.$$
 \end{proof}
	\section{Conclusion}
	
	In \cite{evans2}, Evans gave the idea of collapsed permutation matrix for abelian group and on that basis he partitioned the Orth($G$) into classes and finally computed number of orthomorphisms in these classes, but by using Lemma \ref{le3} and Lemma \ref{le4} we have find inter relations between these classes. That helps in reducing the calculation for the  number of orthomorphisms as we have seen in the case of Orth($A_4$). Further, using the idea of collapsed permutation matrix and Lemma \ref{le3} and Lemma \ref{le4} we can reduce the calculation in the computation of the number of orthomorphisms for any group with trivial or non-cyclic Sylow-$2$-subgroup and having index $3$ normal subgroup. For example:
	
	 \begin{enumerate}
				\item Let $\mathbb{Z}_{2} \times \mathbb{Z}_{6} = \{e,p,p^2,p^3,p^4,p^5,q,qp,qp^2,qp^3,qp^4,qp^5\}$ where, $|p|$ $ = 6$ and $|q|= 2$. Then $H =\{e, p^{3},q,qp^{3}\}$ is a index $3$ normal subgroup. By \cite{evans2}, $\text{Orth}(\mathbb{Z}_{2} \times \mathbb{Z}_{6})$ have $10$ classes. Now, $p \mapsto p^{5}$ is an automorphism which sends coset $pH$ to $p^{5}H$ and vice versa. Therefore, by Lemma \ref{le3} and \ref{le4}, we have $$|\text{Orth}(\mathbb{Z}_{2} \times \mathbb{Z}_{6})| = |400| + 2|220| + 8|130| + 4|121|.$$ One only have to find the number of orthomorphisms in $4$ classes instead of $10$.
				\item  Let $\mathbb{Z}_{15} = \langle 1 \rangle$. Then H = $\langle 3 \rangle$ is a index $3$ normal subgroup.  By \cite{evans2}, $\text{Orth}(\mathbb{Z}_{15})$ have $15$ classes. Now, $x \mapsto x^{2}$ is an automorphism which send coset $1 + H$ to $2+H$ and vice versa. Therefore, by  Lemma \ref{le3} and \ref{le4}, we have $$|\text{Orth}(\mathbb{Z}_{15})| = |500| + 10|140| + 2|230| + 2 |320| + 5|131| +  5|122|.$$ One only have to  find number of orthomorphisms in $6$ classes instead of $15.$
				
				Thus, the above method substantially reduces computational burden in the determination of orthomorphism counts for group with trivial or non-cyclic Sylow-$2$-subgroup and having index $3$ normal subgroup.
	\end{enumerate}
%
%
	
\end{document}